\theoremstyle{plain}
\newtheorem{theorem}{Theorem}[section]
\newtheorem{lemma}[theorem]{Lemma}
\theoremstyle{definition}
\newtheorem{definition}[theorem]{Definition}
\theoremstyle{remark}
\newtheorem{remark}[theorem]{Remark}
\begin{document}

\bibliographystyle{plain}

\newcommand{\Rn}{\mathbb R^n}
\newcommand{\E}{\mathbb E}
\newcommand{\Rm}{\mathbb R^m}
\newcommand{\rn}[1]{{\mathbb R}^{#1}}
\newcommand{\R}{\mathbb R}
\newcommand{\C}{\mathbb C}
\newcommand{\G}{\mathbb G}
\newcommand{\M}{\mathbb M}
\newcommand{\Z}{\mathbb Z}
\newcommand{\D}[1]{\mathcal D^{#1}}
\newcommand{\Ci}[1]{\mathcal C^{#1}}
\newcommand{\Ch}[1]{\mathcal C_{\mathbb H}^{#1}}
\renewcommand{\L}[1]{\mathcal L^{#1}}
\newcommand{\BVG}{BV_{\G}(\Omega)}
\newcommand{\supp}{\mathrm{supp}\;}

\newcommand{\dom}{\mathrm{Dom}\;}
\newcommand{\Ast}{\un{\ast}}

\newcommand{\average}{{\mathchoice {\kern1ex\vcenter{\hrule height.4pt
width 6pt depth0pt} \kern-9.7pt} {\kern1ex\vcenter{\hrule height.4pt width
4.3pt depth0pt}
\kern-7pt} {} {} }}
\newcommand{\ave}{\average\int}

\newcommand{\hhd}[1]{{\mathcal H}_d^{#1}}
\newcommand{\hsd}[1]{{\mathcal S}_d^{#1}}

\newcommand{\he}[1]{{\mathbb H}^{#1}}
\newcommand{\hhe}[1]{{H\mathbb H}^{#1}}

\newcommand{\cov}[1]{{\bigwedge\nolimits^{#1}{\mfrak g}}}
\newcommand{\vet}[1]{{\bigwedge\nolimits_{#1}{\mfrak g}}}

\newcommand{\covw}[2]{{\bigwedge\nolimits^{#1,#2}{\mfrak g}}}

\newcommand{\vetfiber}[2]{{\bigwedge\nolimits_{#1,#2}{\mfrak g}}}
\newcommand{\covfiber}[2]{{\bigwedge\nolimits^{#1}_{#2}{\mfrak g}}}

\newcommand{\covwfiber}[3]{{\bigwedge\nolimits^{#1,#2}_{#3}{\mfrak g}}}

\newcommand{\covv}[2]{{\bigwedge\nolimits^{#1}{#2}}}
\newcommand{\vett}[2]{{\bigwedge\nolimits_{#1}{#2}}}

\newcommand{\covvfiber}[3]{{\bigwedge\nolimits^{#1}_{#2}{#3}}}

\newcommand{\vettfiber}[3]{{\bigwedge\nolimits_{#1,#2}{#3}}}

\newcommand{\covn}[2]{{\bigwedge\nolimits^{#1}\rn {#2}}}
\newcommand{\vetn}[2]{{\bigwedge\nolimits_{#1}\rn {#2}}}
\newcommand{\covh}[1]{{\bigwedge\nolimits^{#1}{\mfrak h_1}}}
\newcommand{\veth}[1]{{\bigwedge\nolimits_{#1}{\mfrak h_1}}}
\newcommand{\hcov}[1]{{_H\!\!\bigwedge\nolimits^{#1}}}
\newcommand{\hvet}[1]{{_H\!\!\bigwedge\nolimits_{#1}}}

\newcommand{\covf}[2]{{\bigwedge\nolimits^{#1}_{#2}{\mfrak h}}}
\newcommand{\vetf}[2]{{\bigwedge\nolimits_{#1,#2}{\mfrak h}}}
\newcommand{\covhf}[2]{{\bigwedge\nolimits^{#1}_{#2}{\mfrak h_1}}}
\newcommand{\vethf}[2]{{\bigwedge\nolimits_{#1,#2}{\mfrak h_1}}}
\newcommand{\hcovf}[2]{{_H\!\!\bigwedge\nolimits^{#1}_{#2}}}
\newcommand{\hvetf}[2]{{_H\!\!\bigwedge\nolimits_{#1,#2}}}

\newcommand{\defin}{\stackrel{\mathrm{def}}{=}}
\newcommand{\gradh}{\nabla_H}
\newcommand{\current}[1]{\left[\!\left[{#1}\right]\!\right]}
\newcommand{\scal}[2]{\langle {#1} , {#2}\rangle}
\newcommand{\escal}[2]{\langle {#1} , {#2}\rangle_{\mathrm{Euc}}}
\newcommand{\Scal}[2]{\langle {#1} \vert {#2}\rangle}
\newcommand{\scalp}[3]{\langle {#1} , {#2}\rangle_{#3}}
\newcommand{\dc}[2]{d_c\left( {#1} , {#2}\right)}
\newcommand{\res}{\mathop{\hbox{\vrule height 7pt width .5pt depth 0pt
\vrule height .5pt width 6pt depth 0pt}}\nolimits}
\newcommand{\norm}[1]{\left\Vert{#1}\right\Vert}
\newcommand{\modul}[2]{{\left\vert{#1}\right\vert}_{#2}}
\newcommand{\perh}{\partial_\mathbb H}

\newcommand{\ccheck}{{\vphantom i}^{\mathrm v}\!\,}

\newcommand{\wcheck}{{\vphantom i}^{\mathrm w}\!\,}

\newcommand{\mc}{\mathcal }
\newcommand{\mbf}{\mathbf}
\newcommand{\mfrak}{\mathfrak}
\newcommand{\mrm}{\mathrm}
\newcommand{\no}{\noindent}
\newcommand{\dis}{\displaystyle}

\newcommand{\U}{\mathcal U}
\newcommand{\ga}{\alpha}
\newcommand{\gb}{\beta}
\newcommand{\gga}{\gamma}
\newcommand{\gd}{\delta}
\newcommand{\eps}{\varepsilon}
\newcommand{\gf}{\varphi}
\newcommand{\GF}{\varphi}
\newcommand{\gl}{\lambda}
\newcommand{\GL}{\Lambda}
\newcommand{\gp}{\psi}
\newcommand{\GP}{\Psi}
\newcommand{\gr}{\varrho}
\newcommand{\go}{\omega}
\newcommand{\gs}{\sigma}
\newcommand{\gt}{\theta}
\newcommand{\gx}{\xi}
\newcommand{\GO}{\Omega}

\newcommand{\Wedge}{\buildrel\circ\over \wedge}

\newcommand{\WO}[4]{\mathop{W}\limits^\circ{}\!_{#4}^{{#1},{#2}}
(#3)}

\newcommand{\GH}{H\G}
\newcommand{\N}{\mathbb N}

%

\newcommand{\Nhmin}{N_h^{\mathrm{min}}}
\newcommand{\Nhmax}{N_h^{\mathrm{max}}}

\newcommand{\Mhmin}{M_h^{\mathrm{min}}}
\newcommand{\Mhmax}{M_h^{\mathrm{max}}}

\newcommand{\un}[1]{\underline{#1}}

\newcommand{\curl}{\mathrm{curl}\;}
\newcommand{\curlh}{\mathrm{curl}_{\he{}}\;}
\newcommand{\hd}{\hat{d_c}}
\newcommand{\divg}{\mathrm{div}_\G\,}
\newcommand{\divgh}{\mathrm{div}_{\hat\G}\,}
\newcommand{\divh}{\mathrm{div}_{\he{}}\,}
\newcommand{\e}{\mathrm{Euc}}


\newcommand{\hatcov}[1]{{\bigwedge\nolimits^{#1}{\hat{\mfrak g}}}}

\title[
] { An application of the Theorem on Sums to viscosity solutions of degenerate fully nonlinear equations
}

\author{Fausto Ferrari}
\address{Dipartimento di Matematica dell'Universit\`a di Bologna, Piazza di Porta S. Donato, 5, 40126 Bologna, Italy.}
\email{\tt fausto.ferrari@unibo.it}

\thanks{The author is supported by  MURST, Italy, and
INDAM-GNAMPA project 2017: {\it Regolarit\`a delle soluzioni viscose per equazioni a derivate parziali non lineari degeneri}}

\thanks{The author wishes to thank P. Mannucci for pointing out her joint papers \cite{MMT1} and \cite{MMT2}.}

\keywords{Degenerate elliptic operators, Nonlinear elliptic operators, Carnot groups, viscosity solutions, Theorem on sums, H\"older regularity}

\subjclass{35D40, 35B65, 35H20.
}

\begin{abstract} 
 We prove  H\"older  continuous regularity of bounded, uniformly continuous,  viscosity solutions  of  degenerate fully nonlinear equations defined in all of $\mathbb{R}^n$ space. In particular the result applies also to some operators in Carnot groups.
\end{abstract}

\maketitle

\tableofcontents

\section{Introduction}

In this paper we continue the study of the properties of the viscosity solutions of some nonlinear PDEs started in \cite{F} and \cite{FeVe}.  In those papers we faced the case of 
 non-divergence nonlinear equations modeled on vector fields
in the Heisenberg group. We  proved there that bounded uniformly continuous functions that are  also viscosity solutions of some nonlinear degenerate uniformly elliptic equations in all the Heisenberg group $\mathbb{H}^1$   are also H\"older continuous in the classical sense.

In those papers we did not need to prove Harnack inequality in advance, as it is customary to do in order to obtain that type of result.

Here we deal with a larger classes of operators, intrinsically uniformly elliptic even if the operators are not elliptic in the classical sense defined in \cite{CC}, obtaining similar regularity results to the ones  proved in \cite{F} and \cite{FeVe}, that is without proving first a Harnack inequality. 
 
 Our research has been originated in reading \cite{Ishii1}. In that paper the author applied the Theorem on sums, see \cite{Crandall}, to an elliptic linear operator having quite smooth coefficients. 
 
 The key point that we exploit in our approach is based on the existence of square root matrices, sufficiently smooth, of the symmetric matrix associated with the second order term of the equation, the so called leading term. 
 
Since in this paper we consider many different families of operators in non-divergence form, we prove that our approach works even in those cases in which, instead of the classical square root matrix, there exist  rectangular square roots matrices $\sigma$ such that $\sigma^T\sigma$ becomes, possibly, the degenerate square matrix describing the second order term of the equation. A typical application of this representation appears in Carnot groups, but many other examples exist.

In order to better explain the result, we introduce the classes of the operators that we deal with. In the sequel we denote with $S^m$ the set of $m\times m$ square symmetric matrices, $m\in \mathbb{N},$ $n\geq 1.$ 

\begin{definition} \label{def1gen}

Let $0<\lambda\leq\Lambda$ be given real  numbers and let $0<m\leq n$ be two positive integers. 
Let  $\sigma$ be  a $m\times n$ matrix, $m\leq n,$ with Lipschitz continuous coefficients defined in $\Omega\subseteq \mathbb{R}^n.$
Let $G:S^m\to \mathbb{R}$ be a given function such that  for every $A,B\in S^m,$ if $B\leq A$ then 
$$
\lambda\mbox{Tr}(A-B)\leq G(A)-G(B)\leq \Lambda \mbox{Tr}(A-B).
$$

We define the function $F:S^n\times\Omega\to \mathbb{R}$ in such a way that,  for every $M\in S^n$ and for every $x\in\Omega$
$$
F(M,x)=G( \sigma(x)M\sigma(x)^T).
$$
We sometime denote for every $x\in \Omega,$ the $n\times n$ matrix $P(x)=\sigma(x)^T\sigma(x).$  
\end{definition}

We postpone to the Section \ref{examples_tools} some comments about the novelty of this family of operators and we state immediately our main result.

\begin{theorem}\label{mainregularityresult}
 Let  $f,c\in C(\mathbb{R}^n)$ be continuous functions and let $L_c,$ $L_f,$ $\beta,$ $\beta'$  be positive real numbers such that  $\beta,\beta'\in (0,1]$ and $\forall$ $x,y\in \mathbb{R}^n,$ $
 |c(x)-c(y)|\leq L_c|x-y|^{\beta}$, $|f(x)-f(y)|\leq L_c|x-y|^{\beta'}.$ Let us suppose $\inf_{x\in  \mathbb{R}^n}c(x):=c_0>0.$ Let $F$ be an operator satisfying Definition \ref{def1gen} where $\sigma$ is Lipschitz continuous in $\mathbb{R}^n$ and $P=\sigma^T\sigma.$ 
 Assume that there exists a positive constant $\bar{c},$  $c_0\geq\bar{c}>0.$ 
 If $u\in C(\mathbb{R}^n)$ is a bounded uniformly continuous viscosity solution of the equation
 $$F(D^{2}u(x),x)-c(x)u(x)= f(x),\quad \mathbb{R}^n,$$   
 and
 \begin{equation}\label{123ip}
\limsup_{|x|\to\infty}\left(\frac{\mbox{Tr}(P(x))}{|x|^2}-\frac{c_0}{2\Lambda}\right)\leq 0,
\end{equation}
 then there exist $0<\alpha:=\alpha(c_0,\bar{c}, L_c,L_f,\Lambda)\in (0,1],$ $\alpha\leq \min\{\beta,\beta'\},$ and $L:=L(c_0,\bar{c}, L_c,L_f,\Lambda)>0$ such that for every $x,y\in\mathbb{R}^n$
 $$
 |u(x)-u(y)|\leq L|x-y|^\alpha.
 $$ 
\end{theorem}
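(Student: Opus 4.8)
The plan is to prove the Hölder estimate by the classical doubling-of-variables technique, adapted to the non-compact setting $\mathbb{R}^n$. Fix $x_0,y_0\in\mathbb{R}^n$; the goal is to show that
\begin{equation*}
u(x)-u(y)\le M|x-y|^\alpha+\Gamma\bigl(|x-x_0|^2+|y-y_0|^2\bigr)
\end{equation*}
for suitable constants $M,\Gamma>0$ and $\alpha\in(0,1]$ to be chosen, independently of $x_0,y_0$; evaluating at $x=x_0$, $y=y_0$ then gives the result. One first observes that, since $u$ is bounded, say $|u|\le N$, the inequality holds trivially once $|x-y|$ is not too small or once $|x-x_0|$ or $|y-y_0|$ is large (using the quadratic penalization and the boundedness of $u$), so the supremum of $\Phi(x,y):=u(x)-u(y)-M|x-y|^\alpha-\Gamma(|x-x_0|^2+|y-y_0|^2)$ over $\mathbb{R}^n\times\mathbb{R}^n$ is attained, at some point $(\bar x,\bar y)$. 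Assume for contradiction that this supremum is positive; in particular $\bar x\ne\bar y$, and the function $|x-y|^\alpha$ is smooth near $(\bar x,\bar y)$.

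Next I would apply the Theorem on Sums (Jensen--Ishii, \cite{Crandall}) at the maximum point $(\bar x,\bar y)$ of $\Phi$. Writing $\phi(x,y)=M|x-y|^\alpha+\Gamma(|x-x_0|^2+|y-y_0|^2)$, this produces $X,Y\in S^n$ with $(D_x\phi(\bar x,\bar y),X)\in\overline{J}^{2,+}u(\bar x)$, $(-D_y\phi(\bar x,\bar y),Y)\in\overline{J}^{2,-}u(\bar y)$, and the matrix inequality
\begin{equation*}
\begin{pmatrix} X & 0\\ 0 & -Y\end{pmatrix}\le D^2\phi(\bar x,\bar y)+\tfrac1\kappa\bigl(D^2\phi(\bar x,\bar y)\bigr)^2
\end{equation*}
for any $\kappa>0$. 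Using the viscosity sub/supersolution inequalities for $u$ at $\bar x$ and $\bar y$, one gets
\begin{equation*}
F(X,\bar x)-c(\bar x)u(\bar x)\ge f(\bar x),\qquad F(Y,\bar y)-c(\bar y)u(\bar y)\le f(\bar y),
\end{equation*}
and subtracting,
\begin{equation*}
c(\bar x)u(\bar x)-c(\bar y)u(\bar y)\le F(X,\bar x)-F(Y,\bar y)-f(\bar x)+f(\bar y).
\end{equation*}
The left side is bounded below using $c\ge c_0$ and the positivity of $\Phi$: one writes $c(\bar x)u(\bar x)-c(\bar y)u(\bar y)= c(\bar x)(u(\bar x)-u(\bar y))+(c(\bar x)-c(\bar y))u(\bar y)$, and since $u(\bar x)-u(\bar y)\ge\phi(\bar x,\bar y)>M|\bar x-\bar y|^\alpha$, this contributes a positive term $\ge \bar c\,M|\bar x-\bar y|^\alpha$ (after absorbing the Hölder term $|c(\bar x)-c(\bar y)|\,|u(\bar y)|\le L_c N|\bar x-\bar y|^\beta$, using $\alpha\le\beta$ and $|\bar x-\bar y|$ small), plus the quadratic penalization term.

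The crucial estimate is the bound on $F(X,\bar x)-F(Y,\bar y)$, and here is where the square-root structure $F(M,x)=G(\sigma(x)M\sigma(x)^T)$ is essential. Using the structural condition on $G$, $F(X,\bar x)-F(Y,\bar y)=G(\sigma(\bar x)X\sigma(\bar x)^T)-G(\sigma(\bar y)Y\sigma(\bar y)^T)\le \Lambda\,\mathrm{Tr}\bigl(\sigma(\bar x)X\sigma(\bar x)^T-\sigma(\bar y)Y\sigma(\bar y)^T\bigr)$ when the matrix difference is nonnegative, and more carefully one tests the matrix inequality above against the $2n\times n$ matrix $\bigl(\sigma(\bar x)^T\ \sigma(\bar y)^T\bigr)^T$ to obtain $\mathrm{Tr}(\sigma(\bar x)X\sigma(\bar x)^T)-\mathrm{Tr}(\sigma(\bar y)Y\sigma(\bar y)^T)\le \mathrm{Tr}\bigl(\text{(penalization Hessian)}\cdot(\cdots)\bigr)$. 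The Lipschitz continuity of $\sigma$ controls $\|\sigma(\bar x)-\sigma(\bar y)\|\le \mathrm{Lip}(\sigma)|\bar x-\bar y|$, and a direct computation of $D^2\phi$ gives that the terms coming from $M|x-y|^\alpha$ scale like $M|\bar x-\bar y|^{\alpha-2}\|\sigma(\bar x)-\sigma(\bar y)\|^2\lesssim M|\bar x-\bar y|^\alpha$, hence are controlled by choosing $\alpha$ small; the cross terms between the $|x-y|^\alpha$-part and the quadratic part, and the pure quadratic part, contribute terms involving $\Gamma\,\mathrm{Tr}(P(\bar x))$, $\Gamma\,\mathrm{Tr}(P(\bar y))$, and $\Gamma\cdot(\text{lower order})$. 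The role of hypothesis \eqref{123ip} is precisely to dominate the bad growth term $\Gamma\Lambda\,\mathrm{Tr}(P(\bar x))$ at infinity: it guarantees that for $|\bar x|$ large, $\Lambda\,\mathrm{Tr}(P(\bar x))\le \tfrac{c_0}{2}|\bar x|^2+o(|\bar x|^2)$, so that the negative contribution $-\bar c\,\Gamma|\bar x-x_0|^2$ from the left side (via $c\ge c_0\ge\bar c$) beats it; for $|\bar x|$ bounded, $\mathrm{Tr}(P(\bar x))$ is simply bounded and the quadratic penalization with $\Gamma$ small handles it.

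Collecting everything, one arrives at an inequality of the form
\begin{equation*}
\bar c\,M|\bar x-\bar y|^\alpha+(\text{nonnegative quadratic terms})\le C_1 M|\bar x-\bar y|^\alpha+C_2\Gamma+ L_f|\bar x-\bar y|^{\beta'},
\end{equation*}
where $C_1$ depends only on $\mathrm{Lip}(\sigma),\Lambda,\alpha$ with $C_1\to 0$ as $\alpha\to 0$, and $C_2$ depends on $\Lambda$ and the bounded-range values of $\mathrm{Tr}(P)$. Choosing first $\alpha$ small enough that $C_1\le \bar c/2$ (and $\alpha\le\min\{\beta,\beta'\}$ so the Hölder terms in $c,f$ are absorbed into $M|\bar x-\bar y|^\alpha$ once $|\bar x-\bar y|\le 1$, which one may assume by a further reduction), then $M$ large and $\Gamma$ small so that $(\bar c/2)M|\bar x-\bar y|^\alpha> C_2\Gamma$ for the relevant range, yields a contradiction with positivity of the supremum. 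The main obstacle, and the technical heart of the argument, is the careful bookkeeping in the Theorem-on-Sums matrix inequality: one must track how each block of $D^2\phi+\tfrac1\kappa(D^2\phi)^2$ interacts with $\sigma(\bar x),\sigma(\bar y)$, choose $\kappa$ proportional to $M|\bar x-\bar y|^{\alpha-2}$ (the natural scale of the singular Hessian of $|x-y|^\alpha$), and verify that all error terms are either of order $M|\bar x-\bar y|^\alpha$ (absorbable by small $\alpha$) or controlled uniformly in $(x_0,y_0)$ via \eqref{123ip}; keeping the final constants independent of the base points $x_0,y_0$ is what makes the estimate global on all of $\mathbb{R}^n$.
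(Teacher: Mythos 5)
Your core machinery is the same as the paper's: apply the Theorem on Sums at the maximum of $u(x)-u(y)-\phi(x,y)$, test the resulting matrix inequality against $[\sigma(\bar x),\sigma(\bar y)]$, use the Lipschitz continuity of $\sigma$ to bound the second-order contribution by $C\alpha M|\bar x-\bar y|^{\alpha}$, and let the zeroth-order term $c\ge c_0$ produce the main positive term, closing by taking $\alpha$ small and $M$ large. The genuine gap is in your localization scheme. You penalize by $\Gamma(|x-x_0|^2+|y-y_0|^2)$ around \emph{arbitrary} base points with $\Gamma$ fixed and evaluate at $(x_0,y_0)$ at the end, so every constant must be uniform in $x_0,y_0$. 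But the penalization Hessian feeds into $F$ a term of size $2\Gamma\Lambda\,\mathrm{Tr}(P(\bar x))$ (plus its analogue at $\bar y$), and hypothesis \eqref{123ip} compares $\mathrm{Tr}(P(x))$ with $|x|^2$, the distance to the \emph{origin}, not with $|x-x_0|^2$. In the motivating examples (Heisenberg, Carnot groups) $\mathrm{Tr}(P(x))\sim |x|^2$, so when $|x_0|$ is large and $\bar x$ is near $x_0$ this term is of order $\Gamma|x_0|^2$, whereas all the good terms you invoke --- $\bar c\,\Gamma|\bar x-x_0|^2$ (small when $\bar x$ is near $x_0$), $\bar c\,M|\bar x-\bar y|^\alpha$, and your ``$C_2\Gamma$'' --- are uniform in $x_0$; your claim that $C_2$ involves only ``the bounded-range values of $\mathrm{Tr}(P)$'' presupposes $\mathrm{Tr}(P)$ bounded on the relevant region, which is false exactly in the degenerate cases the theorem is designed for.

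Even with $x_0=y_0=0$ the fixed-$\Gamma$ scheme does not close: the maximum point only satisfies $|\bar x|^2\lesssim \Vert u\Vert_{L^\infty}/\Gamma$, so $2\Gamma\Lambda\,\mathrm{Tr}(P(\bar x))$ is of order $\Lambda\Vert u\Vert_{L^\infty}$, an error that does not shrink as $\Gamma$ decreases and cannot be absorbed by $\bar c\,M|\bar x-\bar y|^\alpha$, since $|\bar x-\bar y|$ may be arbitrarily small and, the operator being degenerate, there is no uniformly elliptic term of order $-M|\bar x-\bar y|^{\alpha-2}$ to fall back on (the paper notes precisely this failure of the standard localized argument for the Heisenberg sublaplacian). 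The paper's remedy is structural: penalize by $\delta|x|^2$ centered at the origin, matching \eqref{123ip}, add a constant $\epsilon$, prove $u(x)-u(y)\le L|x-y|^\alpha+\delta|x|^2+\epsilon$ with $L$ independent of $\delta,\epsilon$, and then let $\delta,\epsilon\to 0$; in the contradiction argument the penalization error takes the form $2\delta\Lambda|\hat x|^2\bigl(\mathrm{Tr}(P(\hat x))/|\hat x|^2-c_0/(2\Lambda)\bigr)$, which either tends to $0$ (if $|\hat x|$ stays bounded as $\delta\to 0$) or is asymptotically nonpositive by \eqref{123ip}. Recast your argument in that form; as written, the step where \eqref{123ip} is claimed to dominate $\Gamma\Lambda\,\mathrm{Tr}(P(\bar x))$ uniformly in the base points fails.
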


We point out that in our presentation we do not distinguish the operators by considering their possible degenerateness, since the approach that we introduce applies independently to the fact that the operator is degenerate elliptic or it is not. 

In fact it is well known that viscosity theory existence  is independent to the lack of ellipticity. Namely the construction of Perron solutions can be done independently to the ellipticity of the equation. 

As a consequence, even when we deal with PDEs in Carnot groups, we state our results always considering regularity properties with respect to the classical notions of regularity. For instance, in our main result we obtain H\"older regularity of the viscosity solutions in the classical sense. 

We point out this aspect since, on the other hand, there exists also a literature that deal with intrinsic regularity results, see e.g. \cite{LiuManfrediZhou}. 
In particular those results are stated using intrinsic notions associated with the geometry of the operator considered. From this point of view, we recall that the intrinsic distance associated with degenerate PDEs, usually, is not equivalent to the Euclidean one. For the reader convenience we shall come back at the end of Section \ref{examples_tools} on this remark. 

After this introduction, the paper is organized as follows: in Section \ref{examples_tools} we list some cases to which our result applies and we introduce the main tools we need to; in Section  \ref{mainregularityresulttit} we show the proof of Theorem  \ref{mainregularityresult} 
and in Section \ref{concorem} we discuss some final remarks and conclusions. Concerning the recent literature about this subject, in addition to  \cite{F} and \cite{FeVe}, we like to cite also, \cite{MMT1} and \cite{MMT2}. 
\section{Examples and preliminary tools}\label{examples_tools}
We begin this section by listing some examples of the operators belonging to the family introduced in Definition \ref{def1gen}. All the fully nonlinear operators $F,$ that are uniformly elliptic, see \cite{CC}, belong to our class when $P\equiv I.$ In this case $\sigma=I\in S^n$ and $m=n.$

In addition, in order to give an explicit nontrivial example belonging to the class of  fully nonlinear operator studied in \cite{CC}, we consider in $\mathbb{R}^3$ the matrix 
 \begin{equation*}
P_{\mathbb{H}^1}(x)=\left[
\begin{array}{llc}
1,&0,&2x_2\\
0,&1,&-2x_1\\
2x_2,&-2x_1,&4(x_1^2+x_2^2)
\end{array}
\right],
\end{equation*}
 
so that

\begin{equation*}
\sqrt{P_{\mathbb{H}^1}(x)}
 =\left[
 \begin{array}{lll}
\frac{ x_2^2+\frac{x_1^2}{\sqrt{1+4(x_1^2+x_2^2)}}}{x_1^2+x_2^2},&\frac{x_1x_2(1-\frac{1}{\sqrt{1+4(x_1^2+x_2^2)}})}{x_1^2+x_2^2},&\frac{2x_2}{\sqrt{1+4(x_1^2+x_2^2)}}\\
\frac{x_1x_2(1-\frac{1}{\sqrt{1+4(x_1^2+x_2^2)}})}{x_1^2+x_2^2},&\frac{ x_1^2+\frac{x_2^2}{\sqrt{1+4(x_1^2+x_2^2)}}}{x_1^2+x_2^2},&-\frac{2x_1}{\sqrt{1+4(x_1^2+x_2^2)}}\\
\frac{2x_2}{\sqrt{1+4(x_1^2+x_2^2)}},&-\frac{2x_1}{\sqrt{1+4(x_1^2+x_2^2)}},&\frac{4(x_1^2+x_2^2)}{\sqrt{1+4(x_1^2+x_2^2)}}
 \end{array}
 \right].
 \end{equation*}

In the class of our operators we find the following ones

\begin{equation*}\begin{split}
&\mathcal{P}^+_{\mathbb{H}^1}(M,x)
=
\max_{A\in\mathcal{A}_{\lambda,\Lambda}}\mbox{Tr}(A\sqrt{P(x)}M\sqrt{P(x)})
\end{split}
\end{equation*}
and
\begin{equation*}
\begin{split}
&\mathcal{P}^-_{\mathbb{H}^1}(M,x)
=\min_{A\in\mathcal{A}_{\lambda,\Lambda}}\mbox{Tr}(A\sqrt{P(x)}M\sqrt{P(x)}),
\end{split}
\end{equation*}
where 
$$
\mathcal{A}_{\lambda,\Lambda}=\{A\in S^3:\quad \lambda |\xi|^2\leq \langle A\xi,\xi\rangle\leq\Lambda |\xi|^2\}.
$$

They are the analogous ones of the Pucci's extremal operators belonging to the class of fully nonlinear uniformly elliptic operators, see \cite{CC}.

In this framework, also the particular case given by the sublaplacian in the Heisenberg group
$$
\Delta_{\mathbb{H}^1}u(x)\equiv\mbox{Tr}(P_{\mathbb{H}^1}(x)D^2u(x))=G(D^2u(x))=F(D^2u(x),x),
$$
where 
$$
G(M)=\mbox{Tr}(\sqrt{P_{\mathbb{H}^1}(x)}M\sqrt{P_{\mathbb{H}^1}(x)}),
$$
belong to the same class. 

Indeed
$$
\lambda \Delta_{\mathbb{H}^1}u(x)\leq G(\sigma_{\mathbb{H}^1} (x) D^2u(x) \sigma(x)^T_{\mathbb{H}^1})\leq \Lambda \Delta_{\mathbb{H}^1}u.
$$

So that we conclude that this operators are not uniformly elliptic in the classical sense described in \cite{CC}.

It is worth to say that we can also consider those operators $F$ obtained by our definition remarking that if $\sigma$ is not a squared matrix, but simply a rectangular matrix, we can construct, at least apparently, another family of operators.

For example, one more time considering for simplicity the Heisenberg group $\mathbb{H}^1$,  that is the simplest case of a nontrivial Carnot group, we have:

$P_{\mathbb{H}^1}(x)=\sigma^T(x)\sigma(x)$  where:
 \begin{equation*}
\sigma_{\mathbb{H}^1}(x)=\left[
\begin{array}{llc}
1,&0,2x_2\\
0,&1,-2x_1
\end{array}
\right].
\end{equation*}
As a consequence for every $M\in S^{3\times 3}$
$$
F(M,x)=G(\sigma_{\mathbb{H}^1}(x) M \sigma_{\mathbb{H}^1}(x)^T).
$$
This approach can be extended to every Carnot group considering the matrix $\sigma_{\mathbb{G}}$ given by the coefficient that determine the vector fields of the first stratum of the Lie algebra in a Carnot group $\mathbb{G}$, namely we construct the matrix
$$
\sigma^T_{\mathbb{G}}(x)=\left[\begin{array}{l}X_1(x)\\
X_2(x)\\
\vdots\\
X_m(x)
\end{array}
\right],
$$
where
$$
\mathfrak{g}_1=\mbox{span}\left\{X_1,\dots,X_m\right\},
$$
$$
\mathfrak{g}_2=[\mathfrak{g}_1,\mathfrak{g}_1],\quad \mathfrak{g}_{k+1}=[\mathfrak{g}_1,\mathfrak{g}_{k}],\:\:k\leq p-1,\quad \bigoplus_{j=1}^{p}\mathfrak{g}_j=\mathfrak{g}, 
$$
and $\mathfrak{g}$ is the Lie algebra of the group $\mathbb{G}$ and $p$ is its step. We refer to \cite{BLU} for further details.

It is important to point out that, considering different Carnot groups to the Heisenberg one, our definition 
$$
\lambda \mbox{Tr}(\sigma (x) M \sigma(x)^T)\leq F(M,x)=G(\sigma (x) M \sigma(x)^T)\leq \Lambda \mbox{Tr}(\sigma (x) M \sigma(x)^T)
$$
does not necessary translate into the following equivalent condition
 $$
\lambda \Delta_{\mathbb{G}}u(x)\leq F(D^2u(x),x)=G(\sigma (x) D^2u(x) \sigma(x)^T)\leq  \Lambda \Delta_{\mathbb{G}}u(x),
$$
as in the Heisenberg group. Indeed, it is well known that there exist Carnot groups such that
$$
 \mbox{Tr}(\sigma_{\mathbb{G}} (x) M \sigma(x)^T_{\mathbb{G}})\not=\Delta_{\mathbb{G}}u(x),
$$
where, by definition, $\Delta_{\mathbb{G}}u(x):=\sum_{j=1}^mX_j^2u(x).$

For instance considering the Engel group $\mathbb{E}^1\equiv\mathbb{R}^4$, endowed by the non-commutative inner law
\begin{align}\label{legEngel}
	x\cdot y=&\Big(x_1+y_1,x_2+y_2,x_3+y_3-y_1x_2,x_4+y_4+\frac{1}{2}y_1^2x_2-y_1x_3\Big)
\end{align}
where the Jacobian basis, see \cite{BLU}, is
$$
\begin{matrix}
	X_1=\partial_1-x_2\partial_3-x_3\partial_4 &
	X_2=\partial_2\\
	X_3=\partial_3&
	X_4=\partial_4,
\end{matrix}
$$
the matrix $\sigma_{\mathbb{E}^1}$ becomes
\begin{equation*}
\sigma_{\mathbb{E}^1}(x)=\left[
\begin{array}{llcc}
1,&0,&-x_2,&-x_3\\
0,&1,&0,&0
\end{array}
\right]
\end{equation*}
and
$$
\mbox{Tr}(\sigma_{\mathbb{E}^1} (x) D^2u(x) \sigma(x)^T_{\mathbb{E}^1})=X_1^2u+X_2^2u-x_2\frac{\partial u}{\partial x_4}.
$$
In this case the class of operators that we have defined does not contain explicitly the intrinsic sublaplacian on the Engel group given by $\Delta_{\mathbb{E}^1}u=X_1^2u+X_2^2u.$ Nevertheless $\mbox{Tr}(\sigma_{\mathbb{E}^1} (x) D^2u(x) \sigma(x)^T_{\mathbb{E}^1})$ is still a degenerate operator, having the smallest eigenvalue always $0$ in all of $\mathbb{R}^4,$ see Lemma \ref{stilldegenerate} in the next subsection.

In Carnot groups it is defined a natural distance associated with the geometry of the group called in literature the Carnot-Charath\'eodoty distance. This distance can be constructed in many ways. For instance
if $\mathfrak{g}_1(P)=\mbox{span}\{(P),\dots,X_m(P)\},$ for every $P\in \mathbb{G}$ and  the set
$\{(P),\dots,X_m(P)\}$ is braking generating all the the space $\mathbb{R}^n\equiv\mathbb{G},$ then for every function
$\phi:[0,1]\to \mathbb{G}\equiv\mathbb{R}^n$ parametrizing a path $\gamma\subset\mathbb{G}$ such that for every $t\in[0,1],$ $\phi'(t)\in\mathfrak{g}_1(\phi(t)),$ we define 
$$
\mbox{lenght}(\gamma)=\int_0^1\sqrt{\sum_{k=1}^m\langle \phi'(t),X(\phi(t))\rangle^2}dt,
$$
where $(\gamma,\phi)$ is the horizontal path parametrized by $\phi.$ Then for every $P_0,P_1\in \mathbb{G}$ we define:
\begin{equation*}
\begin{split}
&d_{CC}^{\mathbb{G}}(P_0,P_1)\\
&=\inf\{\mbox{lenght}(\gamma_{P_0,P_1}):\:\: \gamma_{P_0,P_1},\:\:\mbox{is horizontal path connecting $P_0$, with $P_1$} \}
\end{split}
\end{equation*}
as the Carnot-Charath\'eodory distance between $P_0$, with $P_1.$ 
This distance is not equivalent to the Euclidean distance, since it holds only that if $K\subset\mathbb{G},$ is bounded, then there exist $C_1, C_2>0$ such that, for every $P_1,P_2\in K$
$$
C_1|P_1-P_2|_E\leq d_{CC}(P_1,P_2)\leq C_2|P_1-P_2|_E^{\frac{1}{p}},
$$
where $p$ denotes the step of the Carnot group. For instance, in the Heisenberg group $p=2,$ in the Engel group $p=4.$ Thus, as we pointed out in the Introduction, we remark that in the statement of Theorem \ref{mainregularityresult} we make use only of the usual Euclidean distance and the classical H\"older modulus of continuity of the viscosity solutions. Thus all the definitions are given in the classical usual sense.

\subsection{Preliminary tools}

In this subsection we list some useful key tools concerning the eigenvalues of matrices obtained as the product of rectangular matrices and the Theorem of the sums, see \cite{Crandall}.  For the notation and the definition of viscosity solution and other symbols like  sub/super jets $J^{2,\pm}u(x)$, we refer one more time to  \cite{Crandall}.
\begin{lemma}
Let $A$ be a symmetric $n\times n$ matrix such that for every $i=1,\dots,n,$ $a_{ii}>0$ then all the eigenvalues of $A$ are strictly positive. 
\end{lemma}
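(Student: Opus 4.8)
The plan is to localize the spectrum of $A$ directly from its entries by means of Gershgorin's circle theorem. Since $A$ is symmetric it is orthogonally diagonalizable and all of its eigenvalues are real, so it suffices to produce, for each eigenvalue $\lambda$, a strictly positive lower bound.

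I would then apply Gershgorin: for every eigenvalue $\lambda$ of $A$ there is an index $i$ with $|\lambda-a_{ii}|\le R_i$, where $R_i:=\sum_{j\ne i}|a_{ij}|$ denotes the $i$-th off-diagonal row sum. Because $\lambda$ is real this confines it to the interval $[\,a_{ii}-R_i,\;a_{ii}+R_i\,]$ for that particular $i$; in particular $\lambda\ge a_{ii}-R_i$. If one knows in addition that $R_i<a_{ii}$ for every $i$ — that is, that the positive diagonal entries strictly dominate the corresponding off-diagonal row sums — then $a_{ii}-R_i>0$, and hence $\lambda>0$, which is the desired conclusion.

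The step I expect to be the real obstacle is precisely this last one: the positivity of the diagonal entries $a_{ii}$, taken in isolation, does not by itself pin down the sign of the eigenvalues, since a single large off-diagonal entry can push an eigenvalue below $0$ already in the $2\times2$ symmetric case. So the crux of any argument of this type is to secure the strict row-sum comparison $\sum_{j\ne i}|a_{ij}|<a_{ii}$; once that quantitative ingredient is in force, the Gershgorin localization above closes everything in one line. A self-contained way to organize the same idea, not quoting Gershgorin, is to take a unit eigenvector $v$ with $Av=\lambda v$ and a coordinate $k$ maximizing $|v_k|$, write $\lambda v_k=a_{kk}v_k+\sum_{j\ne k}a_{kj}v_j$, and divide by $v_k$ to get $|\lambda-a_{kk}|\le\sum_{j\ne k}|a_{kj}|$; this is the hands-on proof of the relevant bound and makes transparent exactly where the row-sum hypothesis must enter.
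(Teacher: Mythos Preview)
Your instinct is right, and in fact it is the main point: the lemma as stated is false, so no proof can succeed without an extra hypothesis. The $2\times 2$ symmetric matrix
\[
A=\begin{pmatrix}1&2\\2&1\end{pmatrix}
\]
has $a_{11}=a_{22}=1>0$ but eigenvalues $3$ and $-1$. Thus the positivity of the diagonal entries alone says nothing about the sign of the spectrum, exactly as you observed. Your Gershgorin outline is a correct proof of the \emph{amended} statement in which one assumes strict diagonal dominance $\sum_{j\ne i}|a_{ij}|<a_{ii}$ for every $i$; it is not, and cannot be, a proof of the statement actually written.

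As for the comparison you were asked to make: the paper offers no proof of this lemma at all --- it is stated and immediately followed by the next lemma --- and it is not invoked anywhere in the subsequent arguments (in particular, the proof of the next lemma on $\sigma\sigma^T$ proceeds directly from $\langle\sigma\sigma^T v,v\rangle=\|\sigma^T v\|^2$ and the rank hypothesis, without appealing to diagonal entries). So there is nothing in the paper to set your attempt against; the honest verdict is that the printed statement is missing a hypothesis, and your write-up already identifies both the gap and the natural repair.
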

\begin{lemma}\label{stilldegenerate}
Let $\sigma$ be a $m\times n$ matrix $m\leq n$ such that $\mbox{rank}(\sigma)=m$ then $\sigma\sigma^T$ is an $m\times m$ strictly positive matrix while if $m<n,$ then $\sigma^T\sigma$ is a degenerate matrix whose eigenvalues different to $0$ are the same of
 $\sigma\sigma^T$ and if $m=n$ then $\sigma^T\sigma$  is invertible and its eigenvalues are the same of $\sigma\sigma^T.$
\end{lemma}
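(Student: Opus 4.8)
\textbf{Proof plan for Lemma \ref{stilldegenerate}.}

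The plan is to reduce everything to the singular value decomposition of $\sigma$ together with two elementary observations about eigenvalues of products $AB$ versus $BA$. First I would record the following standard fact: for any matrices $X$ ($p\times q$) and $Y$ ($q\times p$), the nonzero eigenvalues of $XY$ and $YX$ coincide, with the same algebraic multiplicities; this follows, for instance, from the identity
$$
\begin{pmatrix} I & 0 \\ -Y & I \end{pmatrix}
\begin{pmatrix} XY & 0 \\ Y & 0 \end{pmatrix}
\begin{pmatrix} I & 0 \\ Y & I \end{pmatrix}
=
\begin{pmatrix} 0 & 0 \\ Y & YX \end{pmatrix},
$$
or simply from the fact that if $XYv=\mu v$ with $\mu\neq 0$ then $Y v\neq 0$ and $YX(Yv)=\mu(Yv)$. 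Applying this with $X=\sigma^T$ and $Y=\sigma$ immediately gives that $\sigma^T\sigma$ (an $n\times n$ matrix) and $\sigma\sigma^T$ (an $m\times m$ matrix) have exactly the same nonzero eigenvalues, which is the middle claim of the statement.

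Next I would address positivity of $\sigma\sigma^T$. Since $\mbox{rank}(\sigma)=m$ and $\sigma\sigma^T$ is $m\times m$, it suffices to note that $\sigma\sigma^T$ is symmetric and positive semidefinite (because $\langle \sigma\sigma^T\xi,\xi\rangle=|\sigma^T\xi|^2\geq 0$ for all $\xi\in\mathbb{R}^m$), and that $\sigma^T\xi=0$ forces $\xi=0$ since $\sigma^T$ is $n\times m$ with rank $m$, hence injective. Therefore $\langle\sigma\sigma^T\xi,\xi\rangle=|\sigma^T\xi|^2>0$ for $\xi\neq 0$, so $\sigma\sigma^T$ is strictly positive definite, hence invertible, and all $m$ of its eigenvalues are strictly positive.

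Finally I would treat $\sigma^T\sigma$. In the case $m<n$: the matrix $\sigma^T\sigma$ is $n\times n$ but has rank $m<n$ (rank is unchanged under multiplication by a full-rank factor, or: $\mbox{rank}(\sigma^T\sigma)=\mbox{rank}(\sigma)=m$ because $\sigma v=0 \iff \sigma^T\sigma v=0$, the latter since $\sigma^T\sigma v=0\Rightarrow |\sigma v|^2=\langle\sigma^T\sigma v,v\rangle=0$), so it is singular, i.e. $0$ is an eigenvalue with geometric multiplicity $n-m$; combined with the first paragraph, its nonzero eigenvalues are precisely the (strictly positive) eigenvalues of $\sigma\sigma^T$. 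In the case $m=n$: $\sigma$ is a square matrix of full rank $n$, hence invertible, so $\sigma^T\sigma$ is invertible, it is again positive semidefinite by the same computation $\langle\sigma^T\sigma v,v\rangle=|\sigma v|^2$, and since it is invertible it has no zero eigenvalue and is thus strictly positive; the coincidence of eigenvalues with $\sigma\sigma^T$ again follows from the first paragraph (now all $n$ eigenvalues are nonzero). There is no real obstacle here — the only point requiring a moment of care is making the "same nonzero eigenvalues" assertion precise enough to cover multiplicities, which the displayed conjugation identity handles; everything else is routine linear algebra on positive semidefinite forms.
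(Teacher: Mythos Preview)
Your proposal is correct and follows essentially the same route as the paper: both arguments establish positivity of $\sigma\sigma^T$ via $\langle\sigma\sigma^T\xi,\xi\rangle=|\sigma^T\xi|^2$ together with injectivity of $\sigma^T$, and both transfer nonzero eigenvalues between $\sigma\sigma^T$ and $\sigma^T\sigma$ by the eigenvector argument $v\mapsto\sigma^T v$ (respectively $w\mapsto\sigma w$). Your version is slightly more complete in that you explicitly justify the degeneracy of $\sigma^T\sigma$ for $m<n$ via the rank identity $\mbox{rank}(\sigma^T\sigma)=\mbox{rank}(\sigma)$, and you offer the block-matrix conjugation identity to secure equality of algebraic multiplicities, a point the paper leaves implicit.
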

\begin{proof}
Let $\lambda$ be an eigenvalue of $\sigma\sigma^T$ and $v$ one of its eigenvectors. Then
 $$\sigma\sigma^Tv=\lambda v,$$
 so that $\langle\sigma\sigma^Tv,v\rangle=\lambda ||v||^2,$ so that $\langle\sigma^Tv,\sigma^Tv\rangle=\lambda ||v||^2,$ that implies that $\lambda>0$ whenever $v\not \in \mbox{Ker}\sigma^T.$ Indeed $v\not \in \mbox{Ker}\sigma^T$   because by hypothesys $\mbox{rank}(\sigma)=m.$ Thus, we conclude that $\sigma\sigma^T$ is an $m\times m$ strictly positive, in particular also invertible, matrix. Consider now an eigenvalue $\lambda$ of the matrix $\sigma\sigma^T.$ If $\lambda\not=0$ and
  $v\in \mbox{Ker}(\sigma\sigma^T-\lambda I)$ then
$$\sigma\sigma^Tv=\lambda v.$$
Thus $\sigma^T\sigma(\sigma^Tv)=\lambda \sigma^Tv,$ that is $\lambda$ is also an eigenvalue of $\sigma^T\sigma.$ This proves that all the nonzero eigenvalues  of $\sigma\sigma^T$ are eigenvalues of $\sigma^T\sigma.$ On the other hand
if $\gamma>0$ is an eigenvalue of $\sigma^T\sigma$ then 
$$\sigma^T\sigma w=\gamma w,$$
$w\in \mbox{Ker}(\sigma^T\sigma-\gamma I),$ then
$$(\sigma\sigma^T)\sigma w=\gamma (\sigma w),$$
then $\gamma$ is also an eigenvector of $\sigma\sigma^T,$ because $\sigma w\not= (0)$ since $\mbox{rank}(\sigma)=m.$ As a consequence the nonzero eigenvalues of $\sigma\sigma^T$ are only the strictly positive eigenvalues of $\sigma^T\sigma.$
The case $m=n$ is now trivial.
\end{proof}
The following result is an obvious consequence of the definition of trace of a matrix.
 \begin{lemma}
  Let $A,B\in S^{n}$ be given. For every $m\times n$ matrices $\sigma_1,\sigma_2$ then
  $$
  \mbox{Tr}(\sigma^T_1\sigma_1A-\sigma^T_2\sigma_2B)= \mbox{Tr}(\sigma_1A\sigma^T_1-\sigma_2B\sigma^T_2).
  $$
  \end{lemma}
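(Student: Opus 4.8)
The plan is to reduce the identity to the linearity and the cyclic invariance of the trace, so that no analysis is involved at all. First I would use linearity of $\mathrm{Tr}$ to split both sides into differences: the left-hand side equals $\mathrm{Tr}(\sigma_1^T\sigma_1 A)-\mathrm{Tr}(\sigma_2^T\sigma_2 B)$ and the right-hand side equals $\mathrm{Tr}(\sigma_1 A\sigma_1^T)-\mathrm{Tr}(\sigma_2 B\sigma_2^T)$. Hence it suffices to establish, for $i=1,2$, the two scalar identities $\mathrm{Tr}(\sigma_i^T\sigma_i A)=\mathrm{Tr}(\sigma_i A\sigma_i^T)$ and $\mathrm{Tr}(\sigma_i^T\sigma_i B)=\mathrm{Tr}(\sigma_i B\sigma_i^T)$, after which subtracting and using linearity once more gives the claim.

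Next I would invoke the elementary fact that $\mathrm{Tr}(XY)=\mathrm{Tr}(YX)$ whenever $X$ is a $p\times q$ matrix and $Y$ is a $q\times p$ matrix, so that both products are square. Here, since $\sigma_i$ is $m\times n$ and $A\in S^n$, the matrix $\sigma_i A$ is $m\times n$, while $\sigma_i^T$ is $n\times m$. Taking $X=\sigma_i^T$ and $Y=\sigma_i A$, the products $XY=\sigma_i^T\sigma_i A$ (an $n\times n$ matrix) and $YX=\sigma_i A\sigma_i^T$ (an $m\times m$ matrix) have the same trace, which is precisely $\mathrm{Tr}(\sigma_i^T\sigma_i A)=\mathrm{Tr}(\sigma_i A\sigma_i^T)$; the identity with $B$ in place of $A$ is identical. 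Combining this with the first paragraph completes the proof.

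The only point requiring any care is the bookkeeping of the matrix sizes needed to apply the cyclic identity, in particular noting that $A$ being a genuine $n\times n$ matrix makes $\sigma_i A$ an honest $m\times n$ matrix; there is no substantive obstacle. Should one prefer a fully self-contained computation, one can expand in coordinates, $\mathrm{Tr}(\sigma_i^T\sigma_i A)=\sum_{j,k,\ell}(\sigma_i)_{\ell j}(\sigma_i)_{\ell k}A_{kj}$, and observe that this equals $\sum_{j,k,\ell}(\sigma_i)_{\ell j}A_{jk}(\sigma_i)_{\ell k}=\mathrm{Tr}(\sigma_i A\sigma_i^T)$ after relabelling the summation indices, which makes the identity transparent without appealing to any general lemma.
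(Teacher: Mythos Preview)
Your proof is correct and is precisely the kind of argument the paper has in mind: the paper does not give a detailed proof, stating only that the result ``is an obvious consequence of the definition of trace of a matrix.'' Your use of linearity together with the cyclic identity $\mathrm{Tr}(XY)=\mathrm{Tr}(YX)$ (with the size bookkeeping you spell out) is exactly the intended one-line justification, and your coordinate computation is a fine alternative.
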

  We recall now the maximum principle for semiconvex functions, sometime also named Theorem on the sum, see \cite{Crandall}.
\begin{theorem}[Crandall-Ishii-Lions]\label{CraIsLi}
  Let $\Omega\subseteq\mathbb{R}^n$ be an open set and $u\in USC(\bar{\Omega})$ and $v\in LSC(\bar{\Omega}).$ Let $\phi\in C^{2}(W)$ where $W$ is open and $\Omega\times\Omega \subset W\subseteq\mathbb{R}^n\times\mathbb{R}^n.$
   If there exists $(\hat{x},\hat{y})\in\Omega\time\Omega$ such that
  \begin{equation}
  \begin{split}
  u(\hat{x})-v(\hat{y})-\phi(\hat{x},\hat{y})=\max_{(x,y)\in\overline{\Omega}\times\overline{\Omega}}\left(u(x)-v(y)-\phi(x,y)\right),
  \end{split}
  \end{equation}
  then for each $\mu>0,$ there exist $A=A(\mu)$ and $B=B(\mu)$ such that
  $$
  (D_x\phi(\hat{x},\hat{y}),A)\in\overline{J}^{2,+}u(\hat{x}),\quad (-D_y\phi(\hat{x},\hat{y}),B)\in\overline{J}^{2,-}u(\hat{y}),\quad\mbox{and}
  $$
  \begin{equation*}
  \begin{split}
  -\left(\mu+||D^2\phi(\hat{x},\hat{y})||\right)&
  \left[\begin{array}{cc}
  I,&0\\
  0,&I
  \end{array}
  \right]
 \leq
  \left[\begin{array}{cc}
  A,&0\\
  0,&-B
  \end{array}
  \right]\\
 & \leq D^2\phi(\hat{x},\hat{y})+\frac{1}{\mu}(D^2\phi(\hat{x},\hat{y}))^2.
  \end{split}
  \end{equation*}
  
  Where:
  $$
  D^2\phi(\hat{x},\hat{y})= \left[\begin{array}{cc}
  D^2_{xx}\phi(\hat{x},\hat{y}),&D^2_{yx}\phi(\hat{x},\hat{y})\\
  D^2_{xy}\phi(\hat{x},\hat{y}),&D^2_{yy}\phi(\hat{x},\hat{y})
  \end{array}
  \right]
  $$
  and $||M||$ is the norm given by the maximum, in absolute value, of the eigenvalues of the symmetric matrix $M\in S^{2n}.$
  \end{theorem}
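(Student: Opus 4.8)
The approach is the classical one (cf.\ \cite{Crandall}): regularize $u$ and $-v$ by sup-convolution to gain semiconvexity, locate a good point of twice differentiability near $(\hat x,\hat y)$ by combining Jensen's lemma with Alexandrov's theorem, read off $A$ and $B$ from the block-diagonal Hessian of the regularization, and pass to the limit in the regularization parameters using the closures $\overline{J}^{2,\pm}$. One sets $\hat v:=-v\in USC(\overline{\Omega})$, so that $u(x)+\hat v(y)-\phi(x,y)$ has a maximum at $(\hat x,\hat y)$ and $\overline{J}^{2,-}v(\hat y)=-\overline{J}^{2,+}\hat v(\hat y)$; this sign flip is exactly what produces the ``$-B$'' in the conclusion. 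Since $(\hat x,\hat y)\in\Omega\times\Omega$ is an interior maximum, the whole argument is local around that point.

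First I would carry out the standard normalizations. Translating, assume $(\hat x,\hat y)=0$ and the maximal value is $0$; subtracting the affine map $z\mapsto\langle D\phi(0),z\rangle$ from $u\oplus\hat v$, assume $D\phi(0)=0$ (this subtraction is precisely what eventually places $D_x\phi(\hat x,\hat y)$, resp.\ $-D_y\phi(\hat x,\hat y)$, into the gradient slots of the final jets); adding a penalization $-\eta|z|^4$ and localizing, assume the maximum is strict and global and $\psi:=u\oplus\hat v$ is bounded above. Finally, writing $A:=D^2\phi(0)$ and using $\phi(z)=\tfrac12\langle Az,z\rangle+o(|z|^2)$, replace $\phi$ near $0$ by the quadratic $\tfrac12\langle\bar A z,z\rangle$ with $\bar A:=A+2\delta I$, at the price of a parameter $\delta\downarrow0$ removed at the very end.

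Now regularize: $\psi^{\varepsilon}(z):=\sup_{z'}\{\psi(z')-\tfrac1{2\varepsilon}|z-z'|^2\}$. The structural facts that make the method work are that this separates, $\psi^{\varepsilon}(x,y)=u^{\varepsilon}(x)+\hat v^{\varepsilon}(y)$, that each summand is $\tfrac1\varepsilon$-semiconvex (hence $D^2u^{\varepsilon}\ge-\tfrac1\varepsilon I$, $D^2\hat v^{\varepsilon}\ge-\tfrac1\varepsilon I$ where they exist), and that $D^2\psi^{\varepsilon}$ is block diagonal at points of twice differentiability. Pinning $\tfrac1\varepsilon\ge\mu+\|\bar A\|$ makes $\tfrac1\varepsilon I-\bar A>0$, and a direct computation of the sup-convolution of the quadratic gives, near $0$ and up to the penalization (which affects only lower-order behaviour at $0$),
\[
\psi^{\varepsilon}(z)\le\tfrac12\langle M_{\varepsilon}z,z\rangle,\qquad M_{\varepsilon}:=\bar A+\bar A\Bigl(\tfrac1\varepsilon I-\bar A\Bigr)^{-1}\bar A.
\]
The matrix $M_{\varepsilon}$ is exactly the origin of the $\tfrac1\mu(D^2\phi)^2$ term, because $\tfrac1\varepsilon I-\bar A\ge\mu I$ forces $\bar A(\tfrac1\varepsilon I-\bar A)^{-1}\bar A\le\tfrac1\mu\bar A^2$.

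The analytic core — and the step I expect to be the main obstacle — is the extraction of the matrices. The function $g:=\psi^{\varepsilon}-\tfrac12\langle M_{\varepsilon}\,\cdot\,,\,\cdot\,\rangle$ is semiconvex with a maximum near $0$ (made strict by a further $-\eta'|z|^4$); by Jensen's lemma the small vectors $q$ for which $g(\cdot)+\langle q,\cdot\rangle$ has a local maximum near $0$ form a set of positive Lebesgue measure, so by Alexandrov's theorem one such maximizer $(\hat x^{\varepsilon},\hat y^{\varepsilon})$ is a point of twice differentiability of $g$, hence of $\psi^{\varepsilon}$, at which $D^2g\le0$, i.e.
\[
-\tfrac1\varepsilon I\le\mathrm{diag}\bigl(D^2u^{\varepsilon}(\hat x^{\varepsilon}),\,D^2\hat v^{\varepsilon}(\hat y^{\varepsilon})\bigr)\le M_{\varepsilon}.
\]
The ``magic property'' of sup-convolutions — $(p,X)\in J^{2,+}u^{\varepsilon}(\hat x^{\varepsilon})$ implies $(p,X)\in J^{2,+}u(\hat x^{\varepsilon}-\varepsilon p)$ with the same Hessian, and symmetrically for $\hat v^{\varepsilon}$ — transfers these jets back to $u$ and $\hat v$ at nearby points. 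It then remains to remove the auxiliary parameters: as $\eta'\downarrow0$ the maximizers and the (now uniformly bounded) matrices converge along subsequences and, by the definition of $\overline{J}^{2,\pm}$, yield jets of $u$ and $\hat v$ at $0$; and as $\delta\downarrow0$ one has $\|\bar A\|\to\|A\|$, $\tfrac1\varepsilon\to\mu+\|A\|$ and $M_{\varepsilon}\to A+A\bigl((\mu+\|A\|)I-A\bigr)^{-1}A\le A+\tfrac1\mu A^2$. Undoing the translation and the affine subtraction, this produces $(D_x\phi(\hat x,\hat y),A)\in\overline{J}^{2,+}u(\hat x)$ and $(-D_y\phi(\hat x,\hat y),B)\in\overline{J}^{2,-}v(\hat y)$ with $\mathrm{diag}(A,-B)$ squeezed between $-(\mu+\|D^2\phi(\hat x,\hat y)\|)I$ and $D^2\phi(\hat x,\hat y)+\tfrac1\mu(D^2\phi(\hat x,\hat y))^2$. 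The genuinely delicate points are keeping the successive penalizations compatible with the sup-convolution so the perturbed functions really do retain a maximum near the origin, keeping the matrix family bounded (which is why $\varepsilon$ is held at $\sim 1/(\mu+\|A\|)$ rather than sent to $0$), and checking that the jet-closure passage preserves the gradient slots and both matrix inequalities simultaneously.
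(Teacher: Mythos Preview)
Your sketch is a faithful outline of the classical proof in \cite{Crandall}: sup-convolution regularization, Jensen's lemma plus Alexandrov's theorem to locate a point of twice differentiability, the magic property to push jets back to $u$ and $v$, and a limiting argument into the closures $\overline{J}^{2,\pm}$. The main steps are all there, and the identification of $M_\varepsilon$ as the source of the $\tfrac{1}{\mu}(D^2\phi)^2$ correction is correct.

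However, the paper does not prove this theorem at all. Theorem~\ref{CraIsLi} is stated as a preliminary tool with the attribution ``see \cite{Crandall}'' and is used as a black box in the proof of Theorem~\ref{mainregularityresult}. So there is nothing to compare your argument against: you have supplied a proof where the paper simply quotes the result. If your goal is to mirror the paper, you should replace your sketch by a one-line citation; if your goal is to make the exposition self-contained, your sketch is appropriate, though you may want to tighten the handling of the penalizations (the interaction between the $-\eta|z|^4$ localization and the sup-convolution is the place where the standard proofs are most careful, and your sketch only flags it rather than resolving it).
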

  
   \begin{lemma}\label{prelemma1}
    Let $\phi(x,y)=|x-y|^{\alpha}.$ If $x\not=y$ then 
  \begin{equation}
  \begin{split}
 D^2\phi(x,y)= \left[\begin{array}{cc}
 M,&-M\\
  -M,&M
  \end{array}
  \right],
  \end{split}
  \end{equation}
where 
\begin{equation*}
  \begin{split}
  M=L\alpha |x-y|^{\alpha-2}\left((\alpha-2)\frac{x-y}{|x-y|}\otimes\frac{x-y}{|x-y|}+I\right),
   \end{split}
  \end{equation*}
  
   \begin{equation}
  \begin{split}
  (D^2\phi(x,y))^2=2\left[\begin{array}{cc}
 M^2,&-M^2\\
  -M^2,&M^2
  \end{array}
  \right],
  \end{split}
  \end{equation}
 and
 
  \begin{equation}
  \begin{split}
  M^2=\alpha^2L^2 |x-y|^{2(\alpha-2)}\left(\alpha(\alpha-2)\frac{x-y}{|x-y|}\otimes\frac{x-y}{|x-y|}+I\right).
   \end{split}
  \end{equation}
  \end{lemma}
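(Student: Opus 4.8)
This is purely a computation of the Euclidean Hessian of $\phi(x,y)=|x-y|^\alpha$ on $\mathbb{R}^n\times\mathbb{R}^n$ off the diagonal, together with the square of the resulting $2n\times 2n$ symmetric matrix; the constant $L$ that appears in the three displayed formulae is just an overall multiplicative factor, so by linearity of differentiation it is enough to run the computation for $|x-y|^\alpha$ and multiply through by $L$ at the end. The plan is to abbreviate $r:=|x-y|>0$ (legitimate precisely because $x\neq y$) and $p:=(x-y)/|x-y|$, a unit vector, and to use repeatedly the elementary identities $D_x r=p$, $D_y r=-p$, $D_x(x-y)=I$, $D_y(x-y)=-I$.

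First I would compute the gradient blocks: the chain rule gives $D_x\phi=\alpha r^{\alpha-1}p=\alpha r^{\alpha-2}(x-y)$, and since $\phi$ is symmetric under the interchange of its two groups of variables, $D_y\phi=-\alpha r^{\alpha-2}(x-y)$. Next I would differentiate once more. Using $D_x(r^{\alpha-2})=(\alpha-2)r^{\alpha-4}(x-y)$ and $D_x(x-y)=I$,
$$D^2_{xx}\phi=\alpha\big((\alpha-2)r^{\alpha-4}(x-y)\otimes(x-y)+r^{\alpha-2}I\big)=\alpha r^{\alpha-2}\big((\alpha-2)\,p\otimes p+I\big)=:M.$$
An identical computation, now with $D_y(r^{\alpha-2})=-(\alpha-2)r^{\alpha-4}(x-y)$ and $D_y(x-y)=-I$, gives $D^2_{yy}\phi=M$ (the two sign changes cancel against the outer leading sign) and $D^2_{xy}\phi=D^2_{yx}\phi=-M$, so that $D^2\phi$ has the asserted block form with diagonal blocks $M$ and off-diagonal blocks $-M$.

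For the square, I would simply carry out the $2\times2$ block multiplication of the matrix with diagonal blocks $M$ and off-diagonal blocks $-M$ by itself: each diagonal block becomes $M\cdot M+(-M)(-M)=2M^2$ and each off-diagonal block becomes $M(-M)+(-M)M=-2M^2$, so $(D^2\phi)^2=2\begin{bmatrix}M^2&-M^2\\-M^2&M^2\end{bmatrix}$. To put $M^2$ in the stated form I would use that $p\otimes p$ is an orthogonal projection, $(p\otimes p)^2=p\otimes p$ because $|p|=1$; expanding,
$$\big((\alpha-2)\,p\otimes p+I\big)^2=\big((\alpha-2)^2+2(\alpha-2)\big)\,p\otimes p+I=\alpha(\alpha-2)\,p\otimes p+I,$$
whence $M^2=\alpha^2 r^{2(\alpha-2)}\big(\alpha(\alpha-2)\,p\otimes p+I\big)$. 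Reinstating $L$ (so that $M$ carries $L$ and $M^2$ carries $L^2$) reproduces the three displayed formulae verbatim.

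There is no genuine obstacle in this lemma; the only two points to watch are the sign $D_y|x-y|=-p$, which is exactly what produces the minus signs in the off-diagonal blocks and in the mixed second derivative, and the identity $(p\otimes p)^2=p\otimes p$, which collapses the square of $M$ into a one-parameter combination of $p\otimes p$ and $I$. Finally I would note explicitly that the whole computation is valid only for $r>0$, i.e. $x\neq y$, so that the negative powers of $r$ and the unit vector $p$ are well defined.
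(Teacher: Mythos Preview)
Your proposal is correct and is exactly the straightforward calculation the paper alludes to; the paper's own proof consists of the single sentence ``The proof follows by straightforward calculation,'' and you have simply written that calculation out in full. Your handling of the stray factor $L$ (present in the displayed $M$ but absent from the stated $\phi$) is also the right reading: in the application the auxiliary function is $L|x-y|^\alpha$, so the $L$ is just carried along by linearity.
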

  \begin{proof}
  The proof follows by straightforward calculation.
  \end{proof}
  It is well known, at least since \cite{Ishii_Lions}, that 
  viscosity solutions of the equation
  $$
  F(D^2u(x))=f(x), \quad \Omega,
  $$ 

 $F$ uniformly elliptic, in the usual sense (see \cite{CC}), homogeneous of degree one, are $C^{0,\alpha}$ in every ball $B\subset 4B\subset\subset \Omega,$ whenever $f\in C(\Omega).$
  
  We want to adapt previous result to the case of degenerate elliptic operators that we are dealing with in this paper. Before doing this, we recall in the next subsection this approach.

\subsection{$C^\alpha$ regularity for uniformly elliptic operators without Harnack inequality}

It is possible to prove $C^\alpha$ regularity of viscosity solutions without proving first the Harnack inequality. Indeed it is sufficient to reduce the problem to a ball of radius $1$ for a non-constant function $0<u<1.$
The scheme of the proof, see for example the idea in \cite{LuiroParviainen} or in \cite{Imbert_Silvestre} for a slightly different but equivalent approach, is the following one:

Let $w(x,y)=u(x)-u(y)-L|x-y|^\alpha-2|x-z|^2,$ for every $z\in B_{1/4}$ 
and denote $\phi(x,y)=L|x-y|^\alpha$ so that
$w(x,y)=u(x)-u(y)-\phi(x,y)-2|x-z|^2,$
Let 
$$
\max_{B_1(0)\times B_1(0)}w(x,y)=w(\hat{x},\hat{y}):=\theta.
$$

Assume by contradiction that $\theta>0.$ Then $\hat{x}\not=\hat{y}.$
Thanks to the localization term $2|x-z|^2$, then $(\hat{x},\hat{y})\in B_{1/4}(0).$ 

By the Theorem of the sums, for every $\mu>0,$
there exist $A=A(\mu)$ and $B=B(\mu)$ such that
  $$
  (D_x\phi(\hat{x},\hat{y}),A)\in\overline{J}^{2,+}u(\hat{x}),\quad (-D_y\phi(\hat{x},\hat{y}),B)\in\overline{J}^{2,-}u(\hat{y}),\quad\mbox{and}
  $$
  \begin{equation*}
  \begin{split}
 \left[\begin{array}{cc}
  A,&0\\
  0,&-B
  \end{array}
  \right] \leq D^2\phi(\hat{x},\hat{y})+\frac{1}{\mu}(D^2\phi(\hat{x},\hat{y}))^2.
  \end{split}
  \end{equation*}

In particular this implies
  \begin{equation*}
  \begin{split}
  \left[\begin{array}{cc}
  A,&0\\
  0,&-B
  \end{array}
  \right] \leq  \left[\begin{array}{cc}
 M,&-M\\
  -M,&M
  \end{array}
  \right]+\frac{2}{\mu}\left[\begin{array}{cc}
 M^2,&-M^2\\
  -M^2,&M^2
  \end{array}
  \right],
  \end{split}
  \end{equation*}
  so that for every $\xi\in \mathbb{R}^n$
  $$
  \langle (A-B)\xi,\xi\rangle\leq 0.
  $$
In addition we conclude that for every $\xi\in \mathbb{R}^n$
  $$
  \langle(A-B)\xi,\xi\rangle\leq 2\langle (M+\frac{2}{\mu}M^2)\xi,\xi\rangle.
  $$
  Moreover, taking $\bar{\xi}=\frac{x-y}{|x-y|}$ and choosing $\mu$ in right way we conclude that:

$$
\langle(A-B)\bar{\xi},\bar{\xi}\rangle\leq L\alpha(\alpha-1)|\hat{x}-\hat{y}|^{\alpha-2}<0.
$$
In this way taking $L$ sufficiently large we obtain a contradiction concluding that $\theta\leq 0.$
Indeed
\begin{equation*}
\begin{split}
&-2||f||_{L^\infty}\leq f(\hat{x})-f(\hat{y})\leq F(A+2I)-F(B)\leq \Lambda \mbox{Tr}(A-B)+n\Lambda\\
&\leq  L\alpha(\alpha-1)|\hat{x}-\hat{y}|^{\alpha-2}+n\Lambda\to -\infty,
\end{split}
\end{equation*}
as $L\to +\infty.$

So that by choosing $z=\hat{x}\in B_{1/4}(0)$ we get that for every $x\in B_{1/4}(0):$
$$
u(x)-u(y)\leq L|x-y|^\alpha.
$$

This proof can be, in a sense, partially adapted to our operators. Nevertheless, see for instance even the subelliptic Laplace operator in Heinseberg group, we did not manage to prove that $\theta<0$ following the previous proof.

Nevertheless, in a paper by Ishii, see \cite{Ishii1}, there is a proof that in some sense works for some, possibly degenerate, linear operators. We remind in the subsection below the main result from our point of view contained in \cite{Ishii1}.

\subsection{A result for linear elliptic operators}

In paper \cite{Ishii1} it was proven the following result.

 If
$$
Lu(x)=\mbox{Tr}(H(x)D^2u(x))+\langle b(x),Du(x)\rangle-c(x)u(x),
$$
where $H^T=H\in C^{1,1}(\mathbb{R}^n,\mathbb{R}^{2n}),$  $b,c,f\in C^{0,1}(\mathbb{R}^n),$ and there exist a matrix $\sigma$ and a  positive number $\Lambda>0$ such that
$
H\geq 0,\quad \sigma^T\sigma=H,
$
and
\begin{equation}\label{bounddness}
 A\leq \Lambda.
\end{equation}
Moreover denoting by
$$
\lambda_0=\sup_{x\not=y}\left\{\frac{\mbox{Tr}(\sigma(x)-\sigma(y))^2+\langle (b(x)-b(y)),x-y\rangle}{|x-y|^2}\right\}
$$
and
$$
c_0=\inf_{\mathbb{R}^n}c.
$$
Then, see \cite{Ishii1},  we get the following result.

\begin{theorem}[Ishii]
Let $c_0\geq 0$ and assume that $c,f\in C^{0,1}(\mathbb{R}^n).$ Let $u\in C(\mathbb{R}^n)\cap L^\infty(\mathbb{R}^n)$  be a viscosity solution of $Lu=f$ that is also uniformly continuous in $\mathbb{R}^n.$ If $c_0>\lambda_0$ then
$u\in C^{0,1}(\mathbb{R}^n)$ and
$$
|Du|_{L^{\infty}(\mathbb{R}^n)}\leq \frac{1}{c_0-\lambda_0}\left(|Df|_{L^{\infty}(\mathbb{R}^n)}+|Dc|_{L^{\infty}(\mathbb{R}^n)}|u|_{L^{\infty}(\mathbb{R}^n)}\right).
$$ 
\end{theorem}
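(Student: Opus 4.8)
The plan is to run a doubling-of-variables argument on $\Phi(x,y) = u(x) - u(y) - L|x-y| - \varepsilon(\langle x\rangle + \langle y\rangle)$, where $\langle x\rangle = (1+|x|^2)^{1/2}$ provides a coercive localization term (needed since we work on all of $\mathbb R^n$), $L>0$ is the Lipschitz constant we aim to produce, and $\varepsilon>0$ will be sent to $0$ at the end. I want to show that if $L > \frac{1}{c_0-\lambda_0}(|Df|_\infty + |Dc|_\infty |u|_\infty)$, then $\sup\Phi \le 0$ for all small $\varepsilon$; letting $\varepsilon\to 0$ and $L$ decrease to the critical value then gives the stated bound on $|Du|_\infty$. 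Assume for contradiction that $M_\varepsilon := \sup\Phi = \Phi(\hat x,\hat y) > 0$ is attained (the coercivity of $\langle\cdot\rangle$ plus boundedness and uniform continuity of $u$ guarantee the sup is finite and, after the standard $\sup$-regularization or by uniform continuity, attained; one also checks $\hat x \ne \hat y$ since $M_\varepsilon>0$ forces $u(\hat x)-u(\hat y)>0$).

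Next I apply the Theorem on Sums (Theorem \ref{CraIsLi}) to $\phi(x,y) = L|x-y| + \varepsilon(\langle x\rangle + \langle y\rangle)$. Writing $p = \frac{\hat x-\hat y}{|\hat x-\hat y|}$, one has $D_x\phi = Lp + \varepsilon\,D\langle\hat x\rangle$ and $-D_y\phi = Lp - \varepsilon\,D\langle\hat y\rangle$, and $D^2\phi$ is the sum of the singular block matrix $\frac{L}{|\hat x-\hat y|}\begin{bmatrix} Q & -Q \\ -Q & Q\end{bmatrix}$ with $Q = I - p\otimes p$, plus the $O(\varepsilon)$ diagonal piece from the $\langle\cdot\rangle$ terms. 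The theorem yields $(D_x\phi, X) \in \overline J^{2,+}u(\hat x)$ and $(-D_y\phi, Y) \in \overline J^{2,-}u(\hat y)$ with $X,Y$ controlled by $D^2\phi + \frac1\mu (D^2\phi)^2$. Feeding these into the two viscosity inequalities $Lu(\hat x)\ge f(\hat x)$ and $Lu(\hat y)\le f(\hat y)$ and subtracting gives
\begin{equation*}
\begin{split}
c(\hat x)u(\hat x) - c(\hat y)u(\hat y) \le\;& \mathrm{Tr}(H(\hat x)X - H(\hat y)Y) + \langle b(\hat x), D_x\phi\rangle - \langle b(\hat y), -D_y\phi\rangle \\
&- f(\hat x) + f(\hat y).
\end{split}
\end{equation*}
The left side I rewrite as $c(\hat x)(u(\hat x)-u(\hat y)) + (c(\hat x)-c(\hat y))u(\hat y) \ge c_0(M_\varepsilon + \varepsilon(\langle\hat x\rangle+\langle\hat y\rangle)) - |Dc|_\infty|\hat x-\hat y|\,|u|_\infty$; since $M_\varepsilon>0$ this is at least $c_0 L|\hat x-\hat y| - |Dc|_\infty|\hat x-\hat y|\,|u|_\infty$ up to the nonnegative $\varepsilon$-terms.

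The heart of the matter is estimating $\mathrm{Tr}(H(\hat x)X - H(\hat y)Y)$. Using $H = \sigma^T\sigma$ and the matrix inequality from Theorem \ref{CraIsLi} tested against the $2n\times n$ matrix $\begin{bmatrix}\sigma(\hat x)^T \\ \sigma(\hat y)^T\end{bmatrix}$, one gets $\mathrm{Tr}(\sigma(\hat x)X\sigma(\hat x)^T - \sigma(\hat y)Y\sigma(\hat y)^T) \le \mathrm{Tr}\big(C^T(D^2\phi + \tfrac1\mu(D^2\phi)^2)C\big)$ with $C$ as above; the leading $L$-block contributes $\frac{L}{|\hat x-\hat y|}\mathrm{Tr}\big((\sigma(\hat x)-\sigma(\hat y))Q(\sigma(\hat x)-\sigma(\hat y))^T\big) \le \frac{L}{|\hat x-\hat y|}\|\sigma(\hat x)-\sigma(\hat y)\|^2$, which is exactly where $\lambda_0$ enters after one absorbs the companion $\langle b(\hat x)-b(\hat y),\hat x-\hat y\rangle$ term from the first-order part into the same quotient; the $\frac1\mu(D^2\phi)^2$ term is bounded by $C(L)\mu^{-1}|\hat x-\hat y|^{-2}$ and is killed by choosing $\mu = \mu(\varepsilon)$ large, while the $O(\varepsilon)$ cross-terms vanish as $\varepsilon\to 0$. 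Collecting everything, the inequality becomes (modulo terms $\to 0$) $c_0 L|\hat x-\hat y| \le \lambda_0 L|\hat x-\hat y| + |Df|_\infty|\hat x-\hat y| + |Dc|_\infty|u|_\infty|\hat x-\hat y|$, i.e. $(c_0-\lambda_0)L \le |Df|_\infty + |Dc|_\infty|u|_\infty$, contradicting the choice of $L$.

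\textbf{Main obstacle.} The delicate point is the simultaneous, clean handling of the second-order trace term and the first-order $b$-term so that they combine into precisely the quotient defining $\lambda_0$ — in particular getting the factor $\|\sigma(\hat x)-\sigma(\hat y)\|^2$ rather than something like $\|\sigma(\hat x)-\sigma(\hat y)\|^2$ times an extra constant, which requires choosing the test matrix in the matrix inequality exactly right and using $Q\le I$ sharply. A secondary technical nuisance is the bookkeeping of the localization: one must verify that $\hat x,\hat y$ stay in a region where all $\varepsilon$-dependent error terms are genuinely $o(1)$ as $\varepsilon\to 0$ (uniform continuity of $u$ bounds $|\hat x-\hat y|$, and the penalization forces $\varepsilon\langle\hat x\rangle$, $\varepsilon\langle\hat y\rangle$ to stay bounded), and that the final Lipschitz bound passes to the limit correctly. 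The condition $c_0>\lambda_0$ is what makes the contradiction possible; $c_0\ge 0$ alone is used only to ensure the zeroth-order term has the right sign in the localized comparison.
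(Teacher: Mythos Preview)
The paper does not supply a proof of this statement: it is quoted verbatim as a result of Ishii from \cite{Ishii1}, with no argument given. So there is no ``paper's own proof'' to compare against in the strict sense.

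That said, your sketch is correct and is essentially the standard doubling-of-variables argument that Ishii uses, and it is also a close cousin of the proof the paper \emph{does} give for its main Theorem~\ref{mainregularityresult}. The differences are exactly the ones dictated by the target regularity: you use the Lipschitz kernel $L|x-y|$ (whose Hessian involves the projector $Q=I-p\otimes p$) instead of the H\"older kernel $L|x-y|^\alpha$ used in Section~\ref{mainregularityresulttit}; you localize with the smooth weight $\varepsilon\langle x\rangle$ symmetrically in $x$ and $y$ rather than with $\delta|x|^2$; and you must, unlike the paper's nonlinear setting, carry the first-order term $b$ along so that the combination
\[
\frac{\mathrm{Tr}\big((\sigma(\hat x)-\sigma(\hat y))(\sigma(\hat x)-\sigma(\hat y))^T\big)+\langle b(\hat x)-b(\hat y),\hat x-\hat y\rangle}{|\hat x-\hat y|^2}
\]
appears and is bounded by $\lambda_0$. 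You have identified this correctly as the crux of the argument. The one hypothesis you use silently is the uniform bound $H\le \Lambda$ (equation~\eqref{bounddness} in the paper's setup): it is precisely what makes the contribution $\varepsilon\,\mathrm{Tr}\big(H(\hat x)D^2\langle\hat x\rangle\big)$ an honest $O(\varepsilon)$ term, since without it $H(\hat x)$ could grow along a sequence $|\hat x|\sim 1/\varepsilon$. With that in hand, your bookkeeping of the $\mu^{-1}$ and $\varepsilon$ error terms is sound, and the contradiction $(c_0-\lambda_0)L\le |Df|_\infty+|Dc|_\infty|u|_\infty$ falls out as you describe.
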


\begin{remark}
If $H(x)=I,$  then $\lambda_0\leq L_b,$ where $L_b$ denotes the Lipschitz constant associated with $b.$ Moreover, if $H(x)=P(x),$ and $b=0,$ that is in the case of the Heisenberg group, then $\mbox{Tr}(P(x)D^2u(x))=\Delta_{\mathbb{H}^1}u.$ Nevertheless condition (\ref{bounddness}) it is not satisfied because $P(x)\leq 1+4(x_1^2+x_2^2).$
Anyhow the approach seems useful to get a first result in the direction we desire as we shall prove in the next Section \ref{mainregularityresulttit}.
\end{remark}

We are now in position to give the proof of our main result.

\section{Proof of Theorem \ref{mainregularityresult}}\label{mainregularityresulttit}

Let
$$
\Phi(x,y)=u(x)-u(y)-L|x-y|^\alpha-\delta |x|^2-\epsilon.
$$
We claim that there exists $L_0(c,||u||_{L^\infty},||f||_{L^\infty})$ such that for every  $\epsilon,\delta>0,$ if $L\geq L_0$ then
$$
\sup_{\mathbb{R}^n\times\mathbb{R}^n}\Phi(x,y)\leq 0.
$$ 
Indeed, arguing by contradiction, if there exist $\epsilon_0>0$ and $\delta_0>0$ such that for $\delta\leq\delta_0,$ $\epsilon\leq\epsilon_0$
$$
\sup_{\mathbb{R}^n\times\mathbb{R}^n}\{u(x)-u(y)-L|x-y|^\alpha-\delta |x|^2-\epsilon\}=\theta> 0,
$$
then invoking Therorem of the sums, see Theorem \ref{CraIsLi} in this paper, and  denoting $\phi=L|x-y|^\alpha,$ we get that
there exist $A=A(\mu)$ and $B=B(\mu)$ such that
  $$
  (D_x\phi(\hat{x},\hat{y}), A+2\delta I)\in\overline{J}^{2,+}u(\hat{x}),\quad (-D_y\phi(\hat{x},\hat{y}),B)\in\overline{J}^{2,-}u(\hat{y}),
  $$
  and the following estimate holds:
  \begin{equation*}
  \begin{split}
 \left[\begin{array}{cc}
  A,&0\\
  0,&-B
  \end{array}
  \right]\leq D^2\phi(\hat{x},\hat{y})+\frac{1}{\mu}(D^2\phi(\hat{x},\hat{y}))^2.
  \end{split}
  \end{equation*}
We remark that denoting
  \begin{equation*}
  \begin{split}
  M:=\alpha L|x-y|^{\alpha-2}\left((\alpha-2)\frac{x-y}{|x-y|}\otimes\frac{x-y}{|x-y|}+I\right),
   \end{split}
  \end{equation*}
then, keeping in mind also Lemma \ref{prelemma1},
$$
M\leq \alpha L |x-y|^{\alpha-2}I
$$
  and
   \begin{equation*}
  \begin{split}
  M^2\leq \alpha^2L^2 |x-y|^{2(\alpha-2)}I.
   \end{split}
  \end{equation*}

Thus
 \begin{equation*}
  \begin{split}
&D^2\phi(\hat{x},\hat{y})+\frac{1}{\mu}(D^2\phi(\hat{x},\hat{y}))^2\\
&=\left[\begin{array}{cc}
 M,&-M\\
  -M,&M
  \end{array}
  \right]+\frac{2}{\mu}\left[\begin{array}{cc}
 M^2,&-M^2\\
  -M^2,&M^2
  \end{array}
  \right]\\
  &=\left[\begin{array}{cc}
 I,&-I\\
  -I,&I
  \end{array}
  \right]\left[\begin{array}{cc}
 M,&0\\
  0,&M
  \end{array}
  \right]\\
  &+\frac{2}{\mu}\left[\begin{array}{cc}
 I,&-I\\
  -I,&I
  \end{array}
  \right]\left[\begin{array}{cc}
 M^2,&0\\
  0,&M^2
  \end{array}
  \right]\\
  &\leq \alpha L |x-y|^{\alpha-2}\left( 1+\frac{\alpha L}{\mu} |x-y|^{\alpha-2}\right)\left[\begin{array}{cc}
 I,&-I\\
  -I,&I
  \end{array}
  \right],
 \end{split}
  \end{equation*}
that is
\begin{equation*}
  \begin{split}
  \equiv L\alpha |x-y|^{\alpha-2}\eta \left[\begin{array}{cc}
 I,&-I\\
  -I,&I
  \end{array}
  \right].
\end{split}
  \end{equation*}
  Here $\eta>1$ and  $\eta\to 1$ possibly taking $\mu$ larger and larger.

On the other hand we have to adapt our inequality to the degenerate part of our operator encoded in the coefficients of the matrix in the second order operator.
Thus from 
\begin{equation*}
  \begin{split}
 \left[\begin{array}{cc}
  A,&0\\
  0,&-B
  \end{array}
  \right]\leq  L\alpha |x-y|^{\alpha-2}\eta \left[\begin{array}{cc}
 I,&-I\\
  -I,&I
  \end{array}
  \right],
  \end{split}
  \end{equation*}
it follows that
\begin{equation*}
  \begin{split}
&\mbox{Tr}\left(\left[\sigma(\hat{x}),\sigma(\hat{y})\right] \left[\begin{array}{cc}
  A,&0\\
  0,&-B
  \end{array}
  \right]\left[\begin{array}{l}\sigma(\hat{x})^T\\
  \sigma(\hat{y})^T
  \end{array}\right]\right)\\
  &\leq  L\alpha |x-y|^{\alpha-2}\eta \mbox{Tr}\left(\left[\sigma(\hat{x}),\sigma(\hat{y})\right] \left[\begin{array}{cc}
 I,&-I\\
  -I,&I
  \end{array}
  \right]\left[\begin{array}{l}\sigma(\hat{x})^T\\
  \sigma(\hat{y})^T
  \end{array}\right]\right).
  \end{split}
  \end{equation*}
Performing the computation for both sides of previous inequality we get
  \begin{equation}\label{ineine}
  \begin{split}
  &\mbox{Tr}\left(\sigma(\hat{x})A\sigma(\hat{x})^T-\sigma(\hat{y})B\sigma(\hat{x})^T\right)=\mbox{Tr}\left(\sigma(\hat{x})^T\sigma(\hat{x})A)\right)-\mbox{Tr}\left(\sigma(\hat{y})B\sigma(\hat{x})^T\right)\\
  &\leq  L\alpha |x-y|^{\alpha-2}\eta \mbox{Tr}\left(\sigma(\hat{x})\sigma(\hat{x})^T-\sigma(\hat{x})\sigma(\hat{y})^T-\sigma(\hat{y})\sigma(\hat{x})^T+\sigma(\hat{y})\sigma(\hat{y})^T\right)\\
  &= L\alpha |x-y|^{\alpha-2}\eta \left(\sigma(\hat{x})-\sigma(\hat{y})\right)\left(\sigma(\hat{x})-\sigma(\hat{y})\right)^T\\
  &=L\alpha |x-y|^{\alpha-2}\eta \left(\sigma(\hat{x})-\sigma(\hat{y})\right)^2.
  \end{split}
  \end{equation}
We can now exploit  some information contained in the fact that $u$ is a viscosity solution of the equation.
  Indeed recalling that $\theta>0$ we get
  \begin{equation*}
  \begin{split}
L|\hat{x}-\hat{y}|^\alpha+\delta c_0|x|^2\leq u(\hat{x})-u(\hat{y})
\end{split}
  \end{equation*} 
and
\begin{equation*}
  \begin{split}
Lc_0|\hat{x}-\hat{y}|^\alpha+\delta c_0|x|^2&\leq c_0\left(u(\hat{x})-u(\hat{y})\right)\leq c(\hat{x})\left(u(\hat{x})-u(\hat{y})\right)\\
&= c(\hat{x})u(\hat{x})-c(\hat{y})u(\hat{y})+u(\hat{y})\left(c(\hat{y})-c(\hat{x})\right).
\end{split}
  \end{equation*} 
  By the Theorem of the sums and the definition of viscosity subsolution/supersolution we get
  \begin{equation*}
  \begin{split}
Lc_0|\hat{x}-\hat{y}|^\alpha+\delta c_0|x|^2&\leq c_0\left(u(\hat{x})-u(\hat{y})\right)\leq c(\hat{x})\left(u(\hat{x})-u(\hat{y})\right)\\
&= c(\hat{x})u(\hat{x})-c(\hat{y})u(\hat{y})+u(\hat{y})\left(c(\hat{y})-c(\hat{x})\right)\\
&\leq F(A+2\delta I,\hat{x})-F(B,\hat{y})\\
&+f(\hat{y})-f(\hat{x})+u(\hat{y})\left(c(\hat{y})-c(\hat{x})\right)\\
&=G(\sigma(\hat{x})^T(A+2\delta I)\sigma(\hat{x}))-G(\sigma(\hat{y})^TB\sigma(\hat{y}))\\
&+f(\hat{y})-f(\hat{x})+u(\hat{y})\left(c(\hat{y})-c(\hat{x})\right).
\end{split}
  \end{equation*} 
Now, if $\sigma(\hat{x})(A+2\delta I)\sigma(\hat{x})^T\leq \sigma(\hat{y})B\sigma(\hat{y})^T$ we conclude by the elliptic degenerate property that 
$$
Lc_0|\hat{x}-\hat{y}|^\alpha+\delta c_0 |x|^2\leq f(\hat{y})-f(\hat{x})+u(\hat{y})\left(c(\hat{y})-c(\hat{x})\right)
$$
because $G(\sigma(\hat{x})(A+2\delta I)\sigma(\hat{x})^T)-G(\sigma(\hat{y})B\sigma(\hat{y})^T)\leq 0.$ 

On the contrary, if $$\sigma(\hat{x})(A+2\delta I)\sigma(\hat{x})T>\sigma(\hat{y})B\sigma(\hat{y})^T$$ 

then
 \begin{equation*}
  \begin{split}
&Lc_0|\hat{x}-\hat{y}|^\alpha+\delta c_0|\hat{x}|^2\leq \Lambda\mbox{Tr}\left(\sigma(\hat{x})(A+2\delta I)\sigma(\hat{x})^T- \sigma(\hat{y})B\sigma(\hat{y})^T\right)\\
&+f(\hat{y})-f(\hat{x})+u(\hat{y})\left(c(\hat{y})-c(\hat{x})\right)\\
&=\Lambda\mbox{Tr}\left(\sigma(\hat{x})A\sigma(\hat{x})^T- \sigma(\hat{y})B\sigma(\hat{y})^T\right)+2\Lambda\delta \mbox{Tr}(P(\hat{x}))\\
&+f(\hat{y})-f(\hat{x})+u(\hat{y})\left(c(\hat{y})-c(\hat{x})\right)
\end{split}
  \end{equation*} 

Thus
 \begin{equation}\label{123ma}
  \begin{split}
&Lc_0|\hat{x}-\hat{y}|^\alpha\leq \Lambda\mbox{Tr}\left(\sigma(\hat{x})A\sigma(\hat{x})^T- \sigma(\hat{y})B\sigma(\hat{y})^T\right)\\
&+2\delta \Lambda |\hat{x}|^2{{ { ( \frac{\mbox{Tr}(P(\hat{x}))}{|\hat{x}|^2}-\frac{c_0}{2\Lambda})}}}+f(\hat{y})-f(\hat{x})+u(\hat{y})\left(c(\hat{y})-c(\hat{x})\right)\\
&\leq \Lambda\mbox{Tr}\left(\sigma(\hat{x})A\sigma(\hat{x})^T- \sigma(\hat{y})B\sigma(\hat{y})^T\right)\\
&+2\delta \Lambda |\hat{x}|^2{{ { ( \frac{\mbox{Tr}(P(\hat{x}))}{|\hat{x}|^2}-\frac{c_0}{2\Lambda})}}}+L_f|\hat{y}-\hat{x}|^\beta+L_c|u|_{L^\infty}|\hat{y}-\hat{x}|^{\beta'}
\end{split}
  \end{equation} 

If $|\hat{x}|$ is bounded as $\delta\to 0,$ then $$2\delta \Lambda |\hat{x}|^2{{ { ( \frac{\mbox{Tr}(P(\hat{x}))}{|\hat{x}|^2}-\frac{c_0}{2\Lambda})}}}\to 0.$$
If  $|\hat{x}|$ were unbounded as $\delta\to 0,$ then $2\delta \Lambda |\hat{x}|^2{{ { ( \frac{\mbox{Tr}(P(\hat{x}))}{|\hat{x}|^2}-\frac{c_0}{2\Lambda})}}}\leq 0$ whenever
$$
\limsup_{|x|\to\infty}\frac{\mbox{Tr}(P(x))}{|x|^2}<\frac{c_0}{2\Lambda}.
$$

It remains to evaluate $\mbox{Tr}\left(\sigma(\hat{x})A\sigma(\hat{x})^T- \sigma(\hat{y})B\sigma(\hat{y})^T\right).$ Indeed by recalling inequality (\ref{ineine}) we get
 \begin{equation}\label{new}
  \begin{split}
&\mbox{Tr}\left(\sigma(\hat{x})A\sigma(\hat{x})^T- \sigma(\hat{y})B\sigma(\hat{y})^T\right)\\
&\leq \eta\alpha L|\hat{x}-\hat{y}|^{\alpha-2} \mbox{Tr}(\sigma(\hat{x})-\sigma(\hat{y}))^2\leq \bar{C} \eta\alpha L|\hat{x}-\hat{y}|^{\alpha-2}|\hat{x}-\hat{y}|^{2}\\
&\leq C\alpha L |\hat{x}-\hat{y}|^{\alpha}
\end{split}
  \end{equation} 
  thanks to our hypothesis on $\sigma,$ where  $\bar{C}$ and $C$ are bounded and independent to $\hat{x}$ and $\hat{y}.$ 

Summarizing, we have got that
 \begin{equation*}
  \begin{split}
&Lc_0|\hat{x}-\hat{y}|^\alpha\leq C\alpha\Lambda L |\hat{x}-\hat{y}|^{\alpha}+ L_f|\hat{y}-\hat{x}|^\beta+L_c|u|_{L^\infty}|\hat{y}-\hat{x}|^{\beta'},
\end{split}
  \end{equation*}
  that is
  \begin{equation*}
  \begin{split}
&c_0\leq C\alpha\Lambda+ \frac{L_f}{L}|\hat{y}-\hat{x}|^{\beta-\alpha}+\frac{L_c}{L}|u|_{L^\infty}|\hat{y}-\hat{x}|^{\beta'-\alpha}.
\end{split}
  \end{equation*}  
  So that by taking $L$
 sufficiently large and $\alpha$ sufficiently small ($\alpha<\frac{c_0}{C\Lambda}$), we get  a contradiction.
 Indeed, since 
 \begin{equation*}
  \begin{split}
 L&\leq \frac{1}{c_0-C\Lambda\alpha}\left (L_f|\hat{y}-\hat{x}|^{\beta-\alpha}+L_c|u|_{L^\infty}|\hat{y}-\hat{x}|^{\beta'-\alpha}\right)\\
 &\leq\frac{1}{c_0-C\Lambda\alpha}\left (L_f(\frac{|u|_{L^\infty}}{L})^{\beta-\alpha}+L_c|u|_{L^\infty}(\frac{|u|_{L^\infty}}{L})^{\beta'-\alpha}\right)
 \end{split}
  \end{equation*} 
  so that keeping in mind that $|\hat{x}-\hat{y}|\leq \frac{|u|_{L^\infty}}{L},$ and for instance, if $\beta\leq \beta',$ then
   \begin{equation*}
  \begin{split}
 L^{1+\beta'-\alpha}\leq\frac{1}{c_0-C\Lambda\alpha}
 \left (L^{\beta-\beta'}[f]_{C^{\beta}}|u|_{L^\infty}^{\beta-\alpha}+[c]_{C^{\beta'}}|u|_{L^\infty}|u|_{L^\infty}^{\beta'-\alpha}\right)
 \end{split}
  \end{equation*}

 getting a contradiction fixing

 $$
 L>\left\{\frac{1}{c_0-C\Lambda\alpha}
 \left ([f]_{C^{\beta}}|u|_{L^\infty}^{\beta-\alpha}+[c]_{C^{\beta'}}|u|_{L^\infty}^{1+\beta'-\alpha}\right)\right\}^{\frac{1}{1+\beta'-\alpha}}.
 $$

Thus 
$$u(x)-u(y)\leq L|x-y|^\alpha+\delta |x|^2 +\epsilon$$
and letting $\delta$ and $\epsilon$ go to $0$  we conclude that
$$u(x)-u(y)\leq L|x-y|^\alpha.$$

\section{Conclusions and remarks}\label{concorem}
\subsection{Square root matrices and rectangular matrices}

In case $P$ were a square matrix sufficiently smooth, so that $\sigma=\sqrt{P},$ we have the required regularity of $\sigma$ invoking the result contained in  \cite{LiVi} or \cite{Ishii1} coming from  \cite{SV}. In that case we deduce that $\sqrt{P}$ is Lipschitz continuous whenever $P$ is $C^{1,1}.$ See also \cite{PaXu} for a different type of remark about the properties of the square root matrices.

In case $P$ were obtained as the product of two rectangular matrices, the proof of Lipschitz continuity follows straightforwardly from the regularity of the coefficients of $\sigma$ themselves. In this case we have to assume that $\sigma$ has to be at least Lipschitz continuous. Indeed the case of the Heisenberg group we start from analytic coefficients! See for instance the Heisenberg case traited in the introduction.

\subsection{A little gain}\label{menkIshii}

Recalling the notation used in the proof of Theorem \ref{mainregularityresult}, if we know that $\delta(\Lambda\mbox{Tr}(P(\hat{x})-c_0|\hat{x}|)\to 0$ as $\delta \to 0,$ then we could improve the result symply requiring that
$$
 L>  \frac{[f]_{C^{\beta}}+[c]_{C^{\beta'}}}{c_0-C\Lambda\alpha}.
$$
In the case of the Heisenberg group $\mathbb{H}^1,$ for instance concerning the sublaplacian, we have that the result is true if
$$
4\leq \frac{c_0}{2\Lambda},
$$
because $\mbox{Tr}(P(\hat{x}))=2+4(x_1^2+x_2^2).$ 

\subsection{The Carnot group case}
More in general, in Carnot groups, it results, in the nontrivial case,  that $\sigma(x)=\sigma(x')$
where $x'$ denotes the variables  that do not contain the ones that are identified  with the last stratum of the Lie algebra of the group, see for instance Remark 1.4.4, Remark 1.4.5, Remark 1.4.6 in  \cite{BLU}. Thus:
$$
|\sigma(x)-\sigma(y)|\leq C|x'-y'|.
$$
 As a consequence, recalling the inequality (\ref{123ma}) in the proof of Theorem \ref{mainregularityresult}, or the quantity (\ref{123ip}) entering in the statement of the Theorem \ref{mainregularityresult}, we remark that:
 \begin{equation*}
  \begin{split}
 &2\delta \Lambda |\hat{x}|^2{{ { ( \frac{\mbox{Tr}(P(\hat{x}))}{|\hat{x}|^2}-\frac{c_0}{2\Lambda})}}}=2\delta \Lambda \left( \mbox{Tr}(P(\hat{x}))-\frac{c_0}{2\Lambda}|\hat{x}|^2\right)\\
 &=2\delta \Lambda \left( \mbox{Tr}(P(\hat{x}'))-\frac{c_0}{2\Lambda}|\hat{x}|^2\right)=2\delta \Lambda \left( \mbox{Tr}(\sigma(\hat{x}')\sigma(\hat{x}')^T)-\frac{c_0}{2\Lambda}|\hat{x}|^2\right)\\
 &\leq 2\delta \Lambda \left( (c+\phi(|\hat{x}'|))-\frac{c_0}{2\Lambda}|\hat{x}|^2\right)\\
 &= 2\delta \Lambda c+2\Lambda\delta |\hat{x}|^{2-\epsilon}\left(\frac{\phi(|\hat{x}'|))}{|\hat{x}|^{2-\epsilon}}-\frac{c_0}{2\Lambda}|\hat{x}|^\epsilon\right),
 \end{split}
  \end{equation*}  
for a suitable positive number $\epsilon.$ 

Here $\phi$ is a polynomial function depending only on $|x'|$ whose degree depends on the step of the group. In general, if the step of the group is $p,$ then  the degree is less or equal $2(p-1).$  

In this case, if $\phi$ does not grow up too much the result is true without restriction on the
size of $\frac{c_0}{2\Lambda}.$ For example, if $\phi(r)\sim r^{1+\nu}$ as $r\to \infty$ for some $\nu\in [0,1).$

\subsection{Simple examples}
It is easy to construct some examples. In very low dimension, $n=2,$ we are considering: 
 \begin{equation*}
\sigma=\left[
\begin{array}{llc}
1,&0\\
\end{array}
\right].
\end{equation*}
Then 
 \begin{equation*}
\sigma^T\sigma=\left[
\begin{array}{ll}
1,&0\\
0,&0
\end{array}
\right],
\end{equation*}
so that for every operator like
$$
F(D^2u(x),x):=G(\frac{\partial^2u(x,y)}{\partial x^2}),
$$
where $G:\mathbb{R}\to\mathbb{R}$ is monotone increasing and vanishing at $0,$ $c, f$ Lipschitz continuous $\inf_{\mathbb{R}^2}c=c_0>0,$ we deduce from Theorem \ref{mainregularityresult} that bounded uniformly continuous functions satisfying
$$
F(D^2u(x,y),x,y)-cu=f, \quad \mathbb{R}^2
$$ 
in a viscosity sense are Lipschitz continuous in $\mathbb{R}^2.$

Let 
 \begin{equation*}
\sigma=\left[
\begin{array}{llc}
\frac{x}{1+x^2},&0\\
\end{array}
\right].
\end{equation*}
Then 
 \begin{equation*}
\sigma^T\sigma=\left[
\begin{array}{ll}
\frac{x^2}{(1+x^2)^2},&0\\
0,&0
\end{array}
\right].
\end{equation*}
so that for every operator like
$$
F(D^2u(x),x):=G(\frac{x^2}{(1+x^2)^2}\frac{\partial^2u(x,y)}{\partial x^2}),
$$
where $G:\mathbb{R}\to\mathbb{R}$ is uniformly elliptic, $c, f$ are Lipschitz continuous, $\inf_{\mathbb{R}^2}c=c_0>0 ,$we deduce from Theorem \ref{mainregularityresult} that bounded uniformly continuous functions satisfying
$$
F(D^2u(x,y),x,y)-cu=f, \quad \mathbb{R}^2
$$ 
in a viscosity sense are Lipschitz continuous in $\mathbb{R}^2.$ Other examples can be easily constructed for degenerate structures without group structure. A first embrional approach in this direction can be found in \cite{Sachs} for a Grushin operator.

\subsection{Limits to this approach}
We are not able to improve our result assuming lower regularity on the coefficients. Indeed
if in (\ref{new}) we assume that $|\sigma(x)-\sigma(y)|\leq C|x-y|^\gamma,$ $\gamma\in (0,1]$ then we conclude that
 \begin{equation}\label{new2}
  \begin{split}
&\mbox{Tr}\left(\sigma(\hat{x})A\sigma(\hat{x})^T- \sigma(\hat{y})B\sigma(\hat{y})^T\right)\\
&\leq \eta\alpha L|\hat{x}-\hat{y}|^{\alpha-2} \mbox{Tr}(\sigma(\hat{x})-\sigma(\hat{y}))^2\leq \bar{C} \eta\alpha L|\hat{x}-\hat{y}|^{\alpha-2}|\hat{x}-\hat{y}|^{2\gamma}\\
&\leq C\alpha L |\hat{x}-\hat{y}|^{\alpha-2+2\gamma}
\end{split}
  \end{equation} 
but in order to get a contradiction we need to ask also that $\alpha-2+2\gamma\geq \alpha$ and this happens only if $\gamma\geq 1.$

\subsection{Conclusions}

It is possible to deduce the H\"older regularity of viscosity solutions without knowing the Harnack inequality, under the hypotheses of  Theorem \ref{mainregularityresult}, even for degenerate nonlinear operators. 
Concerning the remark discussed in Subsection \ref{menkIshii}, we can not deduce  that for linear operators like the sublaplacian in the Heisenberg group the result \cite{Ishii1} applies, see also \cite{F} and \cite{FeVe}, since $P(\hat{x})$ might behave like $|\hat{x}|^2$ and $\delta |\hat{x}|^2$ is only bounded by $2|u|_{L^\infty}.$
As a consequence, our result seems new, even in the linear case. It is worth to say, even it is well known in literature, that considering operators in divergence form, by recalling  H\"ormander approach, see \cite{Hormander}, it is possible to prove, as it is well known, much more significative  regularity results.

\end{document}